\documentclass[a4paper,10pt]{article}
\usepackage{amsmath, amsthm, amsfonts, amssymb}
\usepackage{mathrsfs}
\usepackage{mathtools}
\usepackage{hyperref}

\usepackage{xcolor}
\usepackage{bbm}

\newcommand{\be}{\begin{equation}}
	\newcommand{\ee}{\end{equation}}

\newcommand{\ba}{\begin{equation}\begin{aligned}}
		\newcommand{\ea}{\end{aligned}\end{equation}}

\newcommand{\di}{\mathrm{d}}

\newcommand{\ve}{\varepsilon}

\newcommand{\E}{{\mathop{\rm E}}}

\newcommand{\Levy}{L\'{e}vy }

\theoremstyle{plain}
\newtheorem{theorem}{Theorem}[section]
\newtheorem{lemma}[theorem]{Lemma}
\newtheorem{proposition}[theorem]{Proposition}
\newtheorem{corollary}[theorem]{Corollary}

\theoremstyle{definition}

\numberwithin{equation}{section}

\let\oldmarginpar\marginpar
\renewcommand{\marginpar}[1]{\oldmarginpar{\scriptsize\texttt{\color{red}{#1}}}}

\begin{document}

\title{Functional central limit theorem for superdiffusive SDEs  with stable noise}

\author{Aleksandar Mijatovi\'c\footnote{University of Warwick, UK (\texttt{a.mijatovic@warwick.ac.uk})},
	\quad Andrey Pilipenko\footnote{  Universit\'e de Gen\`eve, 
Section de math\'ematiques, Switzerland (\texttt{andrey.pilipenko@unige.ch})} \footnote{ Institute of Mathematics of the National Academy of Sciences of Ukraine}  \quad  and \quad 
	Isao Sauzedde\footnote{École normale supérieure de Lyon, France (\texttt{isao.sauzedde@ens-lyon.fr})}
}

\maketitle

\begin{abstract}
This paper establishes a functional stable central limit theorem for a class of superdiffusive solutions to stochastic differential equations (SDEs) driven by an $\alpha$-stable process.  
\end{abstract}

\noindent\textbf{Keywords:} Functional stable central limit theorem,  scaling limits for superdiffusive stochastic processes with stable noise, invariance principle

\smallskip

\noindent\textbf{AMS 2020 Classification:} 60F17, 60G51

\section{The main result}
\label{sec:Introduction}

Let $B_\alpha$ be a strictly $\alpha$-stable process, $\alpha\in(0,2)\setminus\{1\}$, with Lévy measure $(c_+\mathbbm{1}_{u>0}+c_- \mathbbm{1}_{u<0})  |u|^{-1-\alpha} \di u$, where $c_+,c_-\geq0$ and $c_++c_->0$.
Let a smooth function $f:\mathbb{R}\to\mathbb{R}$ satisfy 
$f(x)\sim a x^\beta$ as  $x\to+\infty$ (i.e. $f(x)/(ax^\beta)\to1$ as $x\to+\infty$) for some $a>0$ and $\beta\in\mathbb{R}$.
Consider the stochastic differential equation~(SDE)
\begin{equation}
\label{eq:sde}
dX(t) =f(X(t))dt + dB_\alpha(t),\quad t\geq 0,
\end{equation}
and assume 
\begin{equation}\label{eq:assumption}
\lim_{t\to\infty}X(t)=+\infty \ \ \text{almost surely.}
\end{equation}
Then, by~\cite[Thms~2.1 \& 2.2]{pilipenko2021small},
the following almost sure limit holds for 
any		$\beta\in(1-\alpha,1)$: 
\begin{equation}
\label{eq:asympt_power}
X(t)\sim c_1 t^{\frac{1}{1-\beta}} , \qquad c_1\coloneqq (a(1-\beta))^{\frac{1}{1-\beta}} \ \ \text{a.s. as $t\to \infty$.}	
\end{equation}
Inspired by the central limit theorem (CLT) for the superdiffusive reflected Brownian motion in~\cite{warwick194470},
under mild regularity conditions on $f$, we 
investigate a functional stable central limit theorem  in the context of the strong
law of large numbers in~\eqref{eq:asympt_power}.
\begin{theorem}
\label{th:main1}
    Let $\beta\in(1-\alpha, \frac{1}{1+\alpha})$. Assume $f$ is a $\mathcal{C}^1$ function such that $(\ln(f))'(x)=O(1/x)$ and  $f(x)=a x^\beta +o(x^{\beta-r})$,
with $r=(\alpha+\beta-1)/\alpha$, as $x\to \infty$.
    If in addition~\eqref{eq:assumption} holds,
    then, as $\lambda\to \infty$, 
    \[X^{(\lambda)}\coloneqq \left( \frac{X(\lambda t)-c_1 ( \lambda t)^\frac{1}{1-\beta}}{(\lambda/\rho)^{1/\alpha}}\right)_{t\in[0,\infty)}   \overset{(d)}{\implies} (  t^\gamma B_\alpha(t^\rho))_{t\in[0,\infty)}, \quad \text{where } \gamma\coloneqq\frac{\beta}{1-\beta} \text{ $\&$ }  \rho\coloneqq 1- \frac{\alpha\beta}{1-\beta}, \] 
    and the convergence is in distribution in the space of càdlàg functions (endowed with the topology of local uniform convergence on $(0,\infty)$).  
    
    If $\beta\geq 0$, the convergence in distribution of $X^{(\lambda)}$ holds locally uniformly on $[0,\infty)$. If  $\beta<0$, then $(t^{-\gamma} X^{(\lambda)}(t))_{t \in[0,\infty)}$ converges in distribution to $ (B_\alpha(t^\rho))_{t\in[0,\infty)} $ locally uniformly on $[0,\infty)$.
\end{theorem}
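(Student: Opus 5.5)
We linearize the SDE around the deterministic solution $x(t)\coloneqq c_1 t^{1/(1-\beta)}$, which satisfies $x'(t)=a\,x(t)^\beta$ exactly. The natural approach is to write $X(t)=x(t)+Y(t)$ and derive an approximate linear equation for $Y$:
\[
dY(t)=\bigl(f(x(t)+Y(t))-a x(t)^\beta\bigr)\,dt+dB_\alpha(t)\approx a\beta\, x(t)^{\beta-1}Y(t)\,dt + dB_\alpha(t).
\]
Since $a\beta x(t)^{\beta-1}=\frac{\beta}{(1-\beta)t}=\gamma/t$, the linear equation $dY=(\gamma/t)Y\,dt+dB_\alpha$ has the explicit solution
\[
Y(t)= t^\gamma\Bigl(C+\int_{t_0}^t s^{-\gamma}\,dB_\alpha(s)\Bigr).
\]
By strict stability, the process $\int_0^t s^{-\gamma}dB_\alpha(s)$ has independent increments whose $\alpha$-scale is $\int_0^t s^{-\alpha\gamma}\,ds=t^\rho/\rho$. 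Here $\rho>0$ is exactly the condition $\beta<1/(1+\alpha)$. Hence this process equals in law $B_\alpha(t^\rho/\rho)=\rho^{-1/\alpha}B_\alpha(t^\rho)$. The normalization $(\lambda/\rho)^{1/\alpha}$ and the exponent $\gamma$ then follow from $\lambda$-scaling: $Y(\lambda t)\approx \lambda^{\gamma}\,t^\gamma\,\lambda^{\rho/\alpha}\rho^{-1/\alpha}B_\alpha(t^\rho)$, and $\gamma+\rho/\alpha=1/\alpha$.

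\textbf{Step 1: a random time change.} Because only \eqref{eq:asympt_power} is available a priori and the process may start badly, we first restart. By \eqref{eq:asymptotic_power}'s a.s. convergence, for each $\varepsilon>0$ there is a large deterministic $T$ such that with probability $\geq1-\varepsilon$ we have $X(t)\in[(1-\delta)x(t),(1+\delta)x(t)]$ for all $t\ge T$. Contributions from $[0,T]$ are negligible after dividing by $\lambda^{1/\alpha}$. When $\beta\ge0$ they are also negligible after multiplying by $t^\gamma$ near $0$; when $\beta<0$ one must instead divide by $t^{\gamma}$, which is why the two uniform statements in the theorem differ. The shift of time origin is absorbed by replacing $x(t)$ with the solution $\tilde x$ of $\tilde x'=f(\tilde x)$ started from $X(T)$. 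Using $f=ax^\beta+o(x^{\beta-r})$, one checks $\tilde x(t)-x(t)=o(t^{1/\alpha})$. The exponent $r$ is chosen exactly so that $\int^t x(s)^{\beta-r}ds\asymp t^{(1-r)/(1-\beta)+\ldots}$ is $o(t^{1/\alpha})$; this is a direct computation with $r=(\alpha+\beta-1)/\alpha$.

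\textbf{Step 2: Duhamel with the exact derivative.} Set $Y=X-\tilde x$ and write
\[
f(X)-f(\tilde x)=g(t)Y,\qquad g(t)=\int_0^1 f'(\tilde x+\theta Y)\,d\theta .
\]
This gives $Y(t)=\int_T^t e^{\int_s^t g}\,dB_\alpha(s)$, a pathwise variation-of-constants formula in which the integral is understood by integration by parts, so that the random $g$ causes no measurability issues. The hypothesis $(\ln f)'=O(1/x)$, combined with $X\asymp x$, gives $g(t)=O(1/t)$. The key claim is $\int_s^t g=\gamma\ln(t/s)+o(1)$ uniformly on the relevant ranges. This requires $Y(t)=o(x(t))$, which follows from Step 1, together with the asymptotics $f'(y)\approx a\beta y^{\beta-1}$ in an averaged sense. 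Note that only the $\mathcal C^1$ bound is assumed, not $f'\sim a\beta x^{\beta-1}$. To get around this, we write $\int_s^t g\,du=\int_s^t \frac{f(X)-f(\tilde x)}{X-\tilde x}du$ and use $\ln f(X)-\ln f(\tilde x)=\beta\ln(X/\tilde x)+o(1)$, which follows from $f\sim ax^\beta$. Thus
\[
g=\frac{f(\tilde x)\bigl((X/\tilde x)^\beta(1+o(1))-1\bigr)}{X-\tilde x}.
\]
The $o(1)$ error here is dangerous when $Y/\tilde x$ is tiny. To control it, we would apply the refined expansion $f=ax^\beta+o(x^{\beta-r})$ and the a priori bound $|Y|\lesssim t^{1/\alpha+\epsilon}$, which comes from a first crude pass using $g=O(1/t)$ and stable maximal inequalities.

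\textbf{Step 3: conclusion and main obstacle.} Once $e^{\int_s^t g}=(t/s)^\gamma(1+o(1))$ is established, $Y(\lambda t)/\lambda^{1/\alpha}$ is uniformly close in probability to
\[
\lambda^{-1/\alpha}(\lambda t)^\gamma\int_{T}^{\lambda t}s^{-\gamma}\,dB_\alpha(s),
\]
which has exactly the law of $\rho^{-1/\alpha}t^\gamma \widetilde B_\alpha(t^\rho)$ up to the negligible initial piece. The error terms are controlled by integrating by parts against $B_\alpha$ and using $\sup_{s\le t}|B_\alpha(s)|=O_P(t^{1/\alpha})$. The main obstacle is Step 2: justifying the linearization with only $\mathcal C^1$ regularity and the $o(x^{\beta-r})$ expansion, uniformly in time. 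In particular, we need to show that the accumulated error $\int_T^{\lambda t}|g(u)-\gamma/u|\,|Y(u)|\,du=o_P(\lambda^{1/\alpha})$. We would attack this with a bootstrap: a crude bound on $Y$, then the refined expansion, then an improved bound.
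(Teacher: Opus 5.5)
Your route (linearise around the deterministic solution, then use variation of constants) differs from the paper's, and its architecture can be made to work. However, Step 2, which you flag as the main obstacle, is not closed, and the bootstrap you propose fails at its first stage.

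\textbf{The gap.} For your difference-quotient coefficient $g$, the hypothesis $(\ln f)'=O(1/x)$ only gives $|g(t)|\le K/t$ with an \emph{uncontrolled} constant $K$. For instance, $f(x)=ax^\beta\bigl(1+x^{-\kappa}\sin(Mx^\kappa)\bigr)$ with $\kappa\in(r,1)$ satisfies every hypothesis of the theorem. Yet $f'(x)\approx ax^{\beta-1}\bigl(\beta+M\kappa\cos(Mx^\kappa)\bigr)$, so $K$ is of order $M$. From $|g|\le K/t$ alone you only get $e^{\int_s^t g}\le (t/s)^K$, hence $|Y(t)|\lesssim t^{\max(K,1/\alpha+\epsilon)}$. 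For large $M$ this is worse than what the law of large numbers already gives, so no a priori bound $|Y|\lesssim t^{1/\alpha+\epsilon}$ comes out of the crude pass.

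Your key claim $\int_s^t g=\gamma\ln(t/s)+o(1)$ is likewise not something the hypotheses give you. The remainder $h\coloneqq f-ax^\beta$ contributes $(h(X)-h(\tilde x))/Y$ to $g$. When $|Y(u)|\asymp u^{1/\alpha}$ this term is only $o(1/u)$, which integrates to $o(\ln(t/s))$, not $o(1)$. Whenever $Y$ is close to $0$, it is of order $K/u$.

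\textbf{The missing idea and the fix.} Keep $h$ out of the exponent altogether. Linearise directly around $x(t)=c_1t^{1/(1-\beta)}$ (you do not need $\tilde x$) and write $dY=\tilde g(t)Y\,dt+h(X(t))\,dt+dB_\alpha(t)$, where:
\begin{itemize}
\item $\tilde g=a(X^\beta-x^\beta)/(X-x)=a\beta\xi_t^{\beta-1}=(\gamma/t)(1+o(1))$ a.s., with $\xi_t$ between $X(t)$ and $x(t)$; this uses only the law of large numbers;
\item the additive forcing satisfies $|h(X(t))|=o(t^{1/\alpha-1})$, which is exactly what $r$ is calibrated for.
\end{itemize}
The pathwise Duhamel formula then gives $|Y(t)|=O(t^{1/\alpha+\epsilon})$, since $\gamma<1/\alpha$. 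This yields $|\tilde g(t)-\gamma/t|\lesssim t^{-1-\frac{1}{1-\beta}+\frac1\alpha+\epsilon}$, which is integrable precisely because $\beta>1-\alpha$. Hence $e^{\int_s^t\tilde g}=(t/s)^\gamma(1+o(1))$, and the forcing contributes $o(t^{1/\alpha})$.

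In Step 3, the bound $\sup_{s\le t}|B_\alpha(s)|=O_P(t^{1/\alpha})$ alone is not enough when $\gamma>0$, because $\int_T^t(t/s)^\gamma s^{-1}\,ds\asymp t^\gamma$. Use scaling together with $|B_\alpha(s)|\le s^{1/\alpha-\epsilon}$ for small $s$ instead. Done this way, your argument would not use $(\ln f)'=O(1/x)$ at all.

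\textbf{Comparison with the paper.} The paper avoids linearisation entirely.
\begin{itemize}
\item It applies It\^o's formula to $g(X)$ with $g(x)=\int_{x_0}^x dz/f(z)$ (not your $g$), which makes the drift exactly $1$.
\item The martingale term $\int g'(X(s-))\,dB_\alpha$ is compared with $a^{-1}c_1^{-\beta}\int s^{-\gamma}\,dB_\alpha$, using only $g'(X(s))s^\gamma\to a^{-1}c_1^{-\beta}$.
\item The jump correction is shown negligible via $g''=O(x^{-1-\beta})$ (this is where $(\ln f)'=O(1/x)$ enters) and $\beta>1-\alpha$.
\item The refined expansion of $f$ is used only when inverting $g$ at the end.
\end{itemize}
No a priori fluctuation bound or bootstrap is needed there.
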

    The conditions on $\beta$ are optimal: for $\beta<1-\alpha$ the law of large numbers in~\eqref{eq:asympt_power} ceases to hold, while for $\beta> 1/(1+\alpha)$ we will prove in Proposition~\ref{le:counter} below that the central limit theorem \textit{fails} since $X^{(\lambda)}(1)$ cannot converge weakly as $\lambda\to\infty$. This is in contrast with the strong law in~\eqref{eq:asympt_power}, which holds for all $\beta<1$. 
    
 An analogous phenomenon, motivating Theorem~\ref{th:main1}, in the context of reflected Brownain motion (RBM) is documented  in~\cite{warwick194470}, where a superdiffusive strong law for RBM holds if a certain parameter takes values in the interval $(-1,1)$, while the CLT is valid only on the sub-interval $(-1/3,1)$. The threshold at $-1/(1+2)$ is linked to the fact that the noise in the RBM model is Brownian (i.e. diffusive), while in SDE~\eqref{eq:sde} the noise scales as $1/\alpha$, inducing a critical threshold for the CLT at $1/(1+\alpha)$. We stress here that, unlike Theorem~\ref{th:main1} above, the CLT in~\cite{warwick194470} holds only marginally (if true, its extension to a functional CLT  requires a new idea).

The problem of establishing a stable CLT for the strong law in~\eqref{eq:sde}-\eqref{eq:asympt_power} has a number of natural generalisations, where the  behaviour described by Theorem~\ref{th:main1} is likely to persist: (a) driving the SDE in~\eqref{eq:sde} by a L\'evy process in the long-time stable domain of attraction and/or (b) allowing for multiplicative noise with, say, globally bounded and regular dispersion coefficient; (c) considering arbitrary dimension. Any combination of generalisations (a)-(c) also appears feasible. However, we expect that the analysis of any of those directions would give rise to many technical difficulties not encountered in the present paper.

We conclude the introduction by a brief discussion of assumption~\eqref{eq:assumption}. The almost sure limit in~\eqref{eq:assumption} holds under the following general assumption:
there exists $r\in\mathbb{R}$ such that  $b_r:=\inf_{x\in(-
\infty,r)} f(x)$ is non-negative  if $\alpha>1$ or greater than $-\infty$  if $\alpha<1$ and  positive jumps in $B_\alpha$ are not absent $c_+>0$.

Indeed, under this assumption on $f$,
it is well known from the theory of   
L\'evy processes~\cite[Prop.~37.10 \& Rem.~37.14]{Sato} that $\limsup_{t\to\infty}\left(X(0)+t b_r + B_\alpha(t)\right)=\infty$ a.s., making 
the stopping time $\tau_r:=\inf\{t\geq0:X(t)\geq r\}$
finite a.s. (since, for $t\in[0,\tau_r]$, we have  
$X(t)\geq X(0)+t b_r + B_\alpha(t)$).  Following the reasoning in~\cite[Lem.~A.1]{BM}, note that for any large $R>0$, there exists a non-random time $t_R>0$ such that  
 $\mathbb{P}_x(X(t_R)>R)>0$ for all $x\in[r,R)$ since $\sup_{y\in[r-1,R+1]}|f(y)|<\infty$ and $B_\alpha$ has full support.  As $X$ is Feller continuous~\cite{MR3803683}, its semigroup is lower semicontinuous implying $\epsilon:=\inf_{x\in[r,R]}\mathbb{P}_x(X(t_R)>R)>0$.
Thus, every time $X$ arrives in the interval $[r,\infty)$, which occurs infinitely many times, with probability at least $\epsilon>0$,  $t_R$ units of time later it will be in $[R,\infty)$. The strong Markov property at these entrance times implies $\mathbb{P}(\sup_{t\geq0} X(t)>R)=1$. Since $R>0$ was arbitrary, we obtain $\mathbb{P}(\sup_{t\geq0} X(t)=\infty)=1$,  implying by~\cite[Thm 2.2]{pilipenko2021small} the limit $|X(t)|\to\infty$ as $t\to\infty$ and hence
also~\eqref{eq:assumption}. 

Note also that, since $\beta<1$, the growth of $f$ is sublinear making the process $X$  not  exhibit explosions  (at large positive values, $X$ is bounded above by a stable-driven OU process, which is non-explosive.)

\section{Proof of the main result}

\subsection{Notation}
 We define 
\[ \gamma\coloneqq \frac{\beta}{1-\beta}, \qquad 
\rho\coloneqq 1-\frac{\alpha \beta}{1-\beta}=1-\alpha \gamma,
  \qquad c_1= (a (1-\beta))^{\frac{1}{1-\beta}},\qquad  c_{\pm}(u)=c_+\mathbbm{1}_{u>0}+c_-\mathbbm{1}_{u<0} .
\]
For $\beta<1$ and $\alpha>0$, which we assume throughout the paper, 
the three conditions $\beta<\frac{1}{1+\alpha}$,  $\rho>0$ and $\alpha \gamma<1$ are equivalent to each other. Moreover, the conditions $\beta>1-\alpha$ and $\rho<\alpha $ are equivalent to each other:

\[
 1-\frac{\alpha \beta}{1-\beta} <\alpha \iff 
(1-\alpha)(1-\beta)< \alpha \beta
\iff \beta>1-\alpha.
\]
We will say that a family of processes converges weakly if the convergence holds in distribution in the space of càdlàg functions endowed with the topology of local uniform convergence on $[0,\infty)$.

We fix some $x_0> 0$ such that $f(x)>0$ for all $x\geq x_0$. For $x\geq x_0$, we set \[g(x)\coloneqq \int_{x_0}^x \frac{1}{f(z)}\mathrm{d} z, \]
and we extend $g$ into a $\mathcal{C}^2$ function defined on the whole of $\mathbb{R}$, equal to $0$ on $(-\infty,0]$ (this extension is still called $g$). For $x\geq x_0$, it holds $g'(x)f(x)=1$, and we easily deduce from the first order asymptotic $f(x)\sim a x^{\beta}$ that \[g(x)\sim \frac{x^{1-\beta} }{(1-\beta)a}\text{ as $x\to \infty$,}
\qquad \text{and thus by~\eqref{eq:assumption}} \qquad  
g(X(t))\sim t \text{ as $t\to \infty$.}
\]

We let \[N\coloneqq\{ (s,u)\in \mathbb{R}_+\times (\mathbb{R}\setminus \{0\}): \Delta B_\alpha(s)=u\}\]
be the collection of jumps of $B_\alpha$, and we also write $N$ for the counting measure on the set $N$. 

The constants that appear in the paper depend on $f$ and $\beta$. Additional parameters on which they may depend are written as a subscript (e.g. $C_T$ is a constant that depends on $T$), and we use a subscript $\omega$ for random constants.  

\subsection{Estimation of $g(X(t))$}
The goal of this section is to prove that the family of processes $t\mapsto \lambda^{-1/\alpha+\beta/(1-\beta)}  \Big(g(X({t\lambda}))-t\lambda\Big)$ converges in distribution, as $\lambda\to \infty$, to the process   
\[ 
 (a^{-1}c_1^{-\beta}    \rho^{-1/\alpha} B_\alpha( t^\rho))_{t\in[0,\infty)}.
\]
More precisely, by Itô's formula and SDE~\eqref{eq:sde},
\begin{align}
\label{eq:ito}
   g(X(t))= g(X(0)) & + \int_{0}^t g'(X(s)) f(X(s)) \di t +\int_{0}^t g'(X(s-)) \di  B_\alpha(t)\\
&   + 	\int_{0}^t\int_\mathbb{R}\left( g(X(s-)+u) - g(X(s-)) -g'(X(s-))u\right) N(\di s, \di u). \nonumber 
\end{align}
The first integral in \eqref{eq:sde} is eventually affine, and we will show that after rescaling, the second one converges in distribution to $t\mapsto B_\alpha( t^{\rho})$ while the third is of a smaller order. 

\subsubsection{General properties} 
We start with a few elementary facts that we will use through the proof.
\begin{lemma}
    \label{coro:scale}
    Assume $\rho>0$. Then, the process
    \[Y: t\mapsto \int_0^t s ^{ -\frac{\beta}{1-\beta}}\di B_\alpha(s)
    \]
    is well-defined for all $t\in[0,\infty)$, and equal in distribution to the process $t\mapsto B_\alpha( t^{\rho}/\rho )$.
\end{lemma}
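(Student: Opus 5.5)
The plan is to identify both processes as additive processes, i.e. processes with independent increments, and then to match their characteristic functions. Write $\gamma=\beta/(1-\beta)$, so that the integrand is $h(s)=s^{-\gamma}$ and $\rho=1-\alpha\gamma>0$. The integrand is deterministic, so $Y$ should be defined as a Wiener-type stochastic integral against the stable process. The natural definition is
\[
Y(t)=\lim_{\epsilon\downarrow 0}\int_\epsilon^t s^{-\gamma}\,\di B_\alpha(s).
\]
On $[\epsilon,t]$ the integrand is $\mathcal{C}^1$ and bounded. Integration by parts then gives
\[
\int_\epsilon^t s^{-\gamma}\,\di B_\alpha(s)=t^{-\gamma}B_\alpha(t)-\epsilon^{-\gamma}B_\alpha(\epsilon)+\gamma\int_\epsilon^t s^{-\gamma-1}B_\alpha(s)\,\di s,
\]
which is a pathwise definition. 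If $\gamma\le 0$ (that is, $\beta\le 0$) there is nothing to do, because $h$ is bounded near $0$ and the integral is well defined on $[0,t]$ directly.

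For $\gamma>0$, well-definedness reduces to the pathwise convergence of these boundary and integral terms as $\epsilon\to0$. I would use the small-time law of the iterated logarithm, or simply the Khinchine-type bound $|B_\alpha(s)|=O(s^{1/\alpha-\delta})$ a.s. as $s\downarrow0$, valid for every $\delta>0$. Since $\alpha\gamma<1$ means $\gamma<1/\alpha$, we can pick $\delta$ small enough that $1/\alpha-\delta-\gamma>0$. Then $\epsilon^{-\gamma}B_\alpha(\epsilon)\to0$, and $s^{-\gamma-1}B_\alpha(s)$ is integrable at $0$. So $Y$ is a.s. well defined and càdlàg, with $Y(0)=0$.

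Next comes the characteristic function. For a deterministic step function $h=\sum_i h_i\mathbbm{1}_{(t_{i-1},t_i]}$, independence and stationarity of increments give
\[
\E e^{i\xi\int h\,\di B_\alpha}=\exp\Big(-\int \psi(\xi h(s))\,\di s\Big),
\]
where $\psi$ is the Lévy exponent of $B_\alpha$. For $\alpha\ne1$ strict stability gives $\psi(c\xi)=|c|^\alpha\psi(\xi)$ for $c>0$. This formula passes to the continuous $h$ on $[\epsilon,t]$ by approximation, since the integration-by-parts representation is continuous in $h$ in sup-norm. It then passes to the limit $\epsilon\to0$ because $\int_0^t s^{-\alpha\gamma}\,\di s<\infty$. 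Since $h>0$,
\[
\int_0^t \psi(\xi s^{-\gamma})\,\di s=\psi(\xi)\int_0^t s^{-\alpha\gamma}\,\di s=\psi(\xi)\,\frac{t^{\rho}}{\rho},
\]
and this is exactly the exponent of $B_\alpha(t^\rho/\rho)$.

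To get equality of the laws as processes it suffices to match finite-dimensional distributions, since both processes are càdlàg. Increments of $Y$ over disjoint intervals are independent, because each is a limit of functionals of the increments of $B_\alpha$ on those intervals. Applying the same computation to $\int_{t_{i-1}}^{t_i}$, each increment has exponent $\psi(\xi)(t_i^\rho-t_{i-1}^\rho)/\rho$. This matches the increments of the time-changed process $B_\alpha(\cdot^\rho/\rho)$, whose time change is deterministic and increasing.

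The main obstacle is the behaviour near $s=0$ when $\beta>0$. There one needs both the pathwise existence of the limit and the justification for interchanging the limit with the characteristic function. The latter follows from dominated convergence once a.s. convergence is established, so the key input is the small-time growth bound for $B_\alpha$ together with $\gamma<1/\alpha$, which is where the hypothesis $\rho>0$ enters.
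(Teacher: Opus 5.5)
Your argument is correct, but it takes a different route from the paper.

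\textbf{The paper's route.} The paper proves the lemma in one line by citing a general time-change theorem for stable stochastic integrals (Kallenberg's Theorem~4.1, or Rosi\'nski--Woyczy\'nski's Theorem~3.1 in the symmetric case), applied to $F(s)=s^{-\gamma}$. Running the integral along the inverse of the clock $\int_0^t F(s)^\alpha\,\di s=t^\rho/\rho$ yields a copy of $B_\alpha$, i.e.\ $u\mapsto Y((\rho u)^{1/\rho})$ is $\alpha$-stable. That theorem covers general predictable integrands. The meaning of the improper integral at $0$ is left to its integrability condition $\int_0^t|F|^\alpha\,\di s<\infty$.

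\textbf{Your route.} You use only that the integrand is deterministic:
\begin{itemize}
\item you build $Y$ pathwise, via integration by parts and the small-time bound $|B_\alpha(s)|=O(s^{1/\alpha-\delta})$ (which follows from scaling and Borel--Cantelli along dyadic times);
\item independence of increments then comes for free;
\item you identify the law from $\int\psi(\xi s^{-\gamma})\,\di s=\psi(\xi)\,t^\rho/\rho$, using the homogeneity $\psi(c\xi)=c^\alpha\psi(\xi)$ for $c>0$.
\end{itemize}

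\textbf{What each buys.} Your version is self-contained and shows exactly where $\gamma<1/\alpha$ (i.e.\ $\rho>0$) enters, both for existence and for the law. It also shows explicitly that $Y$ is c\`adl\`ag with $Y(0)=0$, which the paper later relies on when it treats $Y^\pm$ as c\`adl\`ag, a.s.\ finite processes. The price is that it only works for deterministic integrands, and positivity of $h$ is used to avoid symmetry assumptions. Both restrictions are harmless here.

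\textbf{One justification needs repair.} The functional
$h\mapsto h(t)B_\alpha(t)-h(\epsilon)B_\alpha(\epsilon)-\int_\epsilon^t B_\alpha\,\di h$
is not continuous in sup-norm, since it involves $\di h$. Indeed, sup-norm continuity of $h\mapsto\int h\,\di B_\alpha$ on step functions would force $B_\alpha$ to have finite variation, which fails for $\alpha>1$.

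The step is easy to fix. Take left-endpoint approximations $h_n=\sum_i h(t_{i-1})\mathbbm{1}_{(t_{i-1},t_i]}$ of the $\mathcal{C}^1$ function $h$ on $[\epsilon,t]$. Abel summation gives
$\int h_n\,\di B_\alpha=h(t_{n-1})B_\alpha(t)-h(\epsilon)B_\alpha(\epsilon)-\sum_{i<n}B_\alpha(t_i)\big(h(t_i)-h(t_{i-1})\big)$.
This converges pathwise to your integration-by-parts expression, because $B_\alpha$ is bounded and Riemann integrable on $[\epsilon,t]$. Meanwhile $\int\psi(\xi h_n)\,\di s\to\int\psi(\xi h)\,\di s$ by uniform convergence and continuity of $\psi$.

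Alternatively, use that for deterministic integrands the integral is continuous in probability with respect to the $L^\alpha(\di s)$ norm, which on $[\epsilon,t]$ is dominated by the sup-norm.
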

\begin{proof}
    This amounts to show that the process $u\mapsto Y( \rho^{1/\rho} u^{1/\rho}) $ is an $\alpha$-stable process, which is given by an application of \cite[Theorem 4.1]{Kallenberg:1992} (or \cite[Theorem 3.1]{Rosinski+Woyczynski:1986} in the case $B_\alpha$ is a symmetric stable process) to the function $F:s\mapsto s^{-\frac{\beta}{1-\beta}}$.
\end{proof}

\begin{lemma} 
    \label{le:estimeg''}
    There exists a finite constant $C$ such that for all $x\geq 2x_0$ and all $u\in [-x/2,x/2]$, 
    \[ 
    |g(x+u)-g(x)- u g'(x)  | \leq  C  u^2 x^{-1-\beta}.
    \]
    \end{lemma}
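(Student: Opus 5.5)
The plan is to apply Taylor's formula with the Lagrange remainder to $g$ on the segment between $x$ and $x+u$. Since $x\geq 2x_0$ and $|u|\leq x/2$, every point $\xi$ of this segment satisfies $\xi\in[x/2,3x/2]$, so in particular $\xi\geq x_0$. There $g$ coincides with its defining integral, $g'=1/f$, and $g$ is $\mathcal{C}^2$ on $[x_0,\infty)$ because $f$ is $\mathcal{C}^1$ and positive there. Hence
\[
g(x+u)-g(x)-ug'(x)=\tfrac12 u^2 g''(\xi),
\]
and the lemma reduces to a uniform bound of the form $|g''(\xi)|\leq C\xi^{-1-\beta}$ for $\xi\geq x_0$. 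Once we have this bound, comparability $\xi\asymp x$ (with constants $1/2$ and $3/2$) gives $\xi^{-1-\beta}\leq C' x^{-1-\beta}$ for either sign of $-1-\beta$, since $\beta<1$ is arbitrary real.

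To bound $g''$, we compute
\[
g''(\xi)=-\frac{f'(\xi)}{f(\xi)^2}=-\frac{(\ln f)'(\xi)}{f(\xi)}.
\]
By the assumption $(\ln f)'(x)=O(1/x)$ from Theorem~\ref{th:main1}, together with $f(\xi)\sim a\xi^\beta$ and $a>0$, we get $|g''(\xi)|\leq C\xi^{-1}\xi^{-\beta}$ for all $\xi$ large, say $\xi\geq M$. On the compact interval $[x_0,M]$, the functions $g''$ and $\xi^{-1-\beta}$ are continuous, and the latter is bounded below by a positive constant. So the bound extends to all $\xi\geq x_0$ after enlarging $C$.

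The only delicate point is to make sure that the Taylor expansion never uses the arbitrary $\mathcal{C}^2$ extension of $g$ below $x_0$. The choice $x\geq 2x_0$, $|u|\leq x/2$ handles this, since it forces $\xi\geq x/2\geq x_0$. The remaining steps are routine.
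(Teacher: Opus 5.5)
Your proposal is correct and follows the paper's proof. Both arguments write $g''=-(\ln f)'/f$ to get $|g''(z)|\le Cz^{-1-\beta}$ for $z\ge x_0$, then apply a second-order Taylor bound on $[x-|u|,x+|u|]\subset[x/2,3x/2]$. Your two-sided comparability $z\asymp x$ is slightly more careful than the paper, which uses only $z\ge x/2$; that one-sided bound suffices there because $\beta>1-\alpha>-1$.
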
 
    \begin{proof} 
    Since $g''=-(\ln f)'/f$, and since 
    $(\ln f)'(x)=O(1/x)$ and $f(x)\sim a x^{-\beta}$, we deduce $g''(x)=O(x^{-1-\beta})$, as $x\to \infty$. 
    Let $C$ be such that for all $x\geq x_0$, 
    $ 
    |g''(x)|\leq Cx^{-1-\beta} $. 
    Thus, for all $x\geq 2 x_0$, and for $u\in[-x/2,x/2]$, 
    \[\forall z\geq x-|u|\geq x/2,  |g''(z)|\leq C z^{-1-\beta}\leq 2^{1+\beta} C x^{-1-\beta} , \]
    hence
    \begin{align*}
    |g(x+u)-g(x)- u g'(x)  | & = |
    \int_{x}^{x+u}  (g'(z) -g'(x)) \mathrm{d}  z| 
    \leq \frac{u^2}{2} \sup \{ g''(z): z \in [x-|u|, x+|u|]  \}\\
    & \leq \frac{2^{1+\beta} C u^2}{2}x^{-1-\beta}. \qedhere 
    \end{align*}
\end{proof} 
\begin{lemma}
\label{le:deltasmall1}
    Almost surely, 
    \[
    \forall \delta>0, \ \lim_{s\to\infty, (s,u)\in N} \frac{ |u| }{ s^{\frac{1}{\alpha}+\delta}}=0. 
    \]
\end{lemma}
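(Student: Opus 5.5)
We will use a Borel--Cantelli argument on the Poisson random measure $N$, whose intensity is $\di s\, c_\pm(u)|u|^{-1-\alpha}\di u$. Since the events must hold simultaneously for all $\delta>0$, we first reduce to a countable family. The quantity $|u|/s^{1/\alpha+\delta}$ is decreasing in $\delta$ for $s\ge 1$. Hence it suffices to prove, for each fixed $\delta$ in the countable set $\{1/m: m\in\mathbb{N}\}$, that almost surely the limit holds. For a general $\delta>0$ we then take $m$ with $1/m<\delta$, and intersect the countably many almost-sure events.

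Fix $\delta>0$ and $\varepsilon>0$, the latter again ranging over $\{1/k\}$. Consider the set
\[
A_{\varepsilon}\coloneqq\{(s,u): s\geq 1,\ |u|\geq \varepsilon s^{\frac1\alpha+\delta}\}.
\]
Its intensity mass is
\[
\int_1^\infty \int_{|u|\ge \varepsilon s^{1/\alpha+\delta}} c_\pm(u)|u|^{-1-\alpha}\,\di u\,\di s=\frac{c_++c_-}{\alpha}\varepsilon^{-\alpha}\int_1^\infty s^{-1-\alpha\delta}\,\di s<\infty.
\]
Consequently $N(A_\varepsilon)$ is a Poisson random variable with finite mean, so it is finite almost surely. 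Equivalently, one can use the first Borel--Cantelli lemma on the dyadic blocks $s\in[2^k,2^{k+1})$, whose masses are summable.

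Since $N(A_\varepsilon)$ is finite, only finitely many jumps $(s,u)\in N$ with $s\ge1$ satisfy $|u|\ge \varepsilon s^{1/\alpha+\delta}$. Thus $\limsup_{s\to\infty,(s,u)\in N}|u|/s^{1/\alpha+\delta}\le\varepsilon$ almost surely. Intersecting over $\varepsilon=1/k$ gives the limit $0$, and intersecting over $\delta=1/m$ gives the statement as written, with the quantifier $\forall\delta$ inside the almost-sure event.

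There is no real obstacle here. The only point requiring care is the order of quantifiers (almost surely, for all $\delta$), which is handled by the monotonicity in $\delta$ and the countable reduction.
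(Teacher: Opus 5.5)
Your proof is correct, but it takes a different route from the paper.

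\textbf{The paper's route.} The paper argues in one line through the path itself. It cites the almost sure growth bound $|B_\alpha(s)|=o(s^{1/\alpha+\delta})$, which is Sato's Prop.~48.10, a Khintchine-type integral test for stable processes. It then notes that each jump is bounded by $|B_\alpha(s)|+|B_\alpha(s-)|$.

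\textbf{Your route.} You work directly with the jump measure. By the L\'evy--It\^o decomposition, $N$ is a Poisson random measure with intensity $\di s\, c_\pm(u)|u|^{-1-\alpha}\di u$. Your computation of the mass of $A_\varepsilon$ is right: it equals $\frac{c_++c_-}{\alpha^2\delta}\varepsilon^{-\alpha}<\infty$. Hence $N(A_\varepsilon)<\infty$ almost surely, and the limsup bound and the countable reductions follow exactly as you describe.

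\textbf{Comparison.} Your argument is self-contained and elementary. It is essentially the easy (convergent) half of that integral test, restricted to jumps, which is all the lemma needs. It also makes explicit the reduction to countably many $\delta$ and $\varepsilon$ via monotonicity, which the paper leaves implicit. The paper's argument is shorter, but it rests on a deeper statement about the whole path that is stronger than what the lemma requires.
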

\begin{proof}
    It is well known that $\lim_{s\to\infty} |B_\alpha(s)|/s^{\frac{1}{\alpha}+\delta}=0$ a.s., see for example \cite[Prop. 48.10]{Sato}. Therefore
    $\lim_{s\to\infty}|\Delta B_\alpha(s)|/s^{\frac{1}{\alpha}+\delta}=0$ a.s., which concludes the proof.
\end{proof}
\begin{lemma} 
    \label{le:deltasmall2}
    Assume $\beta>1-\alpha$. Then, there exists a random, almost surely finite, time $\tau$ (which is not a stopping time) such that for all $(s,u)\in N $ with $s\geq \tau$, it holds $2|u|\leq X(s-)$. 
\end{lemma}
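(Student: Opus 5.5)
We combine the almost sure growth of $X$ from \eqref{eq:asympt_power} with the almost sure control on jump sizes from Lemma~\ref{le:deltasmall1}. The key observation is that under $\beta>1-\alpha$ we have $\frac{1}{1-\beta}>\frac{1}{\alpha}$, since $1-\beta<\alpha$. So $X$ grows strictly faster than the largest jumps of $B_\alpha$ up to that time.

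First, fix $\delta>0$ small enough that $\frac{1}{\alpha}+\delta<\frac{1}{1-\beta}$, which is possible by the remark above. By \eqref{eq:asympt_power}, which holds a.s. under \eqref{eq:assumption} and $\beta\in(1-\alpha,1)$, there is an a.s. finite random time $\tau_1$ with $X(t)\geq \frac{c_1}{2}t^{\frac{1}{1-\beta}}$ for all $t\geq\tau_1$. Since $X(s-)=\lim_{t\uparrow s}X(t)$, the same bound holds for $X(s-)$ with $s>\tau_1$.

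Second, by Lemma~\ref{le:deltasmall1}, there is an a.s. finite random time $\tau_2$ such that $|u|\leq s^{\frac{1}{\alpha}+\delta}$ for all $(s,u)\in N$ with $s\geq\tau_2$. Third, since $s^{\frac{1}{\alpha}+\delta}/s^{\frac{1}{1-\beta}}\to0$, there is a deterministic $s_0$ such that $2s^{\frac1\alpha+\delta}\leq \frac{c_1}{2}s^{\frac{1}{1-\beta}}$ for all $s\geq s_0$.

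Setting $\tau:=\max(\tau_1+1,\tau_2,s_0)$, every $(s,u)\in N$ with $s\geq\tau$ satisfies $2|u|\leq 2s^{\frac1\alpha+\delta}\leq\frac{c_1}{2}s^{\frac1{1-\beta}}\leq X(s-)$. The time $\tau$ is a.s. finite. It is not a stopping time, because $\tau_1$ and $\tau_2$ are last-exit-type times that depend on the whole future path.

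There is essentially no obstacle beyond checking the exponent comparison $\frac1\alpha<\frac1{1-\beta}$ and being careful to use left limits $X(s-)$. The jump at time $s$ itself might otherwise interfere, but the lower bound on $X$ holds on the whole interval $[\tau_1,\infty)$ and therefore passes to left limits.
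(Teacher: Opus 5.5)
Your proof is correct and is essentially the paper's argument: choose $\delta$ with $\frac1\alpha+\delta<\frac1{1-\beta}$, bound the jump sizes via Lemma~\ref{le:deltasmall1}, and compare them with the almost sure growth $X(s-)\geq \frac{c_1}{2}s^{1/(1-\beta)}$ coming from \eqref{eq:asympt_power}. Your version only makes explicit two things the paper's two-line proof leaves implicit: the random times, and the passage from the bound on $X$ to the bound on its left limits.
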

\begin{proof}
    Let $\delta\in(0, 1/(1-\beta)-1/\alpha )$. Then, as $s_0 \to \infty$, $\sup_{\substack{(s,u)\in N \\ s\leq s_0}}   |u|\ll  s_{0}^{1/\alpha + \delta }$ by Lemma \ref{le:deltasmall1}. Since 
    $ s_{0}^{1/\alpha + \delta }
    \ll s_{0}^{1/(1-\beta)}\propto X(s_{0}-)$, the result follows.  
    %
    %
    \end{proof}
    \begin{corollary}\label{cor:bounds_jumps}
    With probability 1, the limit
    \[
    C_\omega\coloneqq \limsup_{s\to\infty, (s,u)\in N} \frac{|g(X(s-)+u) - g(X(s-)) -g'(X(s-))u|}{s^{\frac{-1-\beta}{1-\beta}}u^2} \quad \text{is finite.}
    \]
\end{corollary}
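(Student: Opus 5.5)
We combine Lemma~\ref{le:deltasmall2} with Lemma~\ref{le:estimeg''} and the strong law~\eqref{eq:asympt_power}. Fix an outcome in the almost sure event on which three things hold: \eqref{eq:assumption} and \eqref{eq:asympt_power} hold, and the random time $\tau$ of Lemma~\ref{le:deltasmall2} is finite. Since $X(s-)\to\infty$ as $s\to\infty$ (left limits inherit the divergence of $X$), there is a random time $\tau'\geq\tau$ such that $X(s-)\geq 2x_0$ for all $s\geq\tau'$.

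Consider any jump $(s,u)\in N$ with $s\geq\tau'$. Lemma~\ref{le:deltasmall2} gives $|u|\leq X(s-)/2$, so we may apply Lemma~\ref{le:estimeg''} with $x=X(s-)$:
\[
|g(X(s-)+u)-g(X(s-))-g'(X(s-))u|\leq C\,u^2\,X(s-)^{-1-\beta}.
\]

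It remains to compare $X(s-)^{-1-\beta}$ with $s^{\frac{-1-\beta}{1-\beta}}$. By \eqref{eq:asympt_power}, $X(s)/s^{1/(1-\beta)}\to c_1$ a.s. The map $s\mapsto s^{1/(1-\beta)}$ is continuous, so the left limits satisfy the same asymptotics: $X(s-)\sim c_1 s^{1/(1-\beta)}$. Since $c_1>0$, this gives
\[
X(s-)^{-1-\beta}\sim c_1^{-1-\beta}\,s^{\frac{-1-\beta}{1-\beta}} .
\]
This holds whatever the sign of $-1-\beta$, because we are only raising a quantity with a positive limit ratio to a fixed power. Hence there is a random time $\tau''\geq\tau'$ such that $X(s-)^{-1-\beta}\leq 2c_1^{-1-\beta}s^{\frac{-1-\beta}{1-\beta}}$ for $s\geq\tau''$.

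Combining, for every $(s,u)\in N$ with $s\geq\tau''$ and $u\neq 0$, the ratio in the statement is at most $2Cc_1^{-1-\beta}$. Therefore $C_\omega\leq 2Cc_1^{-1-\beta}<\infty$ almost surely.

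The only point needing care is the use of Lemma~\ref{le:deltasmall2}, and it is where the hypothesis $\beta>1-\alpha$ is needed. That lemma guarantees $|u|\le X(s-)/2$, without which Lemma~\ref{le:estimeg''} would not apply to large jumps. So the corollary holds under the standing assumption $\beta>1-\alpha$ of Theorem~\ref{th:main1}.
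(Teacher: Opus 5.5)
Your proof is correct and follows the paper's argument essentially verbatim: Lemma~\ref{le:deltasmall2} gives $|u|\le X(s-)/2$, Lemma~\ref{le:estimeg''} gives the quadratic bound, and the strong law~\eqref{eq:asympt_power} replaces $X(s-)^{-1-\beta}$ by a constant multiple of $s^{\frac{-1-\beta}{1-\beta}}$. Your choice of $\tau'$ with $X(s-)\ge 2x_0$ is in fact slightly more careful than the paper's $X(s-)\ge x_0$, since it matches the hypothesis $x\ge 2x_0$ of Lemma~\ref{le:estimeg''} exactly.
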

\begin{proof}
    Let $\tau$ be the random time provided by Lemma \ref{le:deltasmall2}, and let $\tau'$ be a random time such that $X(s-)\geq x_0$ for all $s\geq \tau'$ (which exists by the assumption that $X(t)\to +\infty$). Then, for all $(s,u)\in N$ such that $s
    \geq \max(\tau,\tau'+1)$, it holds that $X(s-)\geq x_0$ and that $|u|\leq X(s-)/2$. Thus, we can apply Lemma \ref{le:estimeg''}, and we deduce that 
    \[
    |g(X(s-)+u)-g(X(s-))- u g'(X(s-))  | \leq  C  u^2 X(s-)^{-1-\beta}\sim 
    C'  u^2 s^{\frac{-1-\beta}{1-\beta}}
    \]
    for some deterministic constants $C,C'$. 
\end{proof}

\subsubsection{Estimating $g(X)$ with $Y$}
Recall that the process $Y$ was defined in Lemma~\ref{coro:scale}.
\begin{proposition}
    \label{prop:error}
    Assume $1-\alpha<\beta<\frac{1}{\alpha+1}$. Then the family of stochastic processes indexed by $\lambda>0$,
    \[t \mapsto  \lambda^{-\frac{\rho}{\alpha} } \int_0^{\lambda t }\int_\mathbb{R}\left( g(X(s-)+u) - g(X(s-)) -g'(X(s-))u\right) N(\di s, \di u), \]
    converges in probability  to $0$ locally uniformly as $\lambda\to\infty$.
\end{proposition}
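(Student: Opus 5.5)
Write $J(t):=\int_0^{t}\int_\mathbb{R}\left( g(X(s-)+u) - g(X(s-)) -g'(X(s-))u\right) N(\di s, \di u)$. Since the integrand is nonnegative in absolute value and the integral is with respect to a counting measure, it suffices to show that
\[
\lambda^{-\rho/\alpha}\int_0^{\lambda T}\int_\mathbb{R}\bigl| g(X(s-)+u) - g(X(s-)) -g'(X(s-))u\bigr| N(\di s,\di u)\to 0
\]
in probability for every fixed $T$. This controls the supremum over $t\in[0,T]$ of the rescaled $J$. First I should check that $J$ is well defined, i.e.\ absolutely summable on bounded time intervals. Near $u=0$, the integrand is bounded by $\sup_{|z-X(s-)|\le 1}|g''(z)|\,u^2$, and $\int u^2 N$ over small jumps is finite on compacts. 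Large jumps are finitely many on compacts. So $J$ has finite variation on compacts.

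\textbf{Splitting the jumps.} I split $N$ into three parts.
\begin{itemize}
\item The part with $s\le \tau_0$, for a random a.s.\ finite time $\tau_0\ge\max(\tau,\tau'+1)$ taken from Corollary~\ref{cor:bounds_jumps}. This contributes a finite random quantity, which is killed by $\lambda^{-\rho/\alpha}\to0$ since $\rho>0$.
\item For $s\ge\tau_0$, Corollary~\ref{cor:bounds_jumps} bounds the integrand by $C_\omega\, s^{-(1+\beta)/(1-\beta)}u^2$ (after enlarging $C_\omega$ if needed). Here I further split into $|u|\le s^{\kappa}$ and $|u|>s^{\kappa}$, with an exponent $\kappa$ to be chosen near $1/\alpha$.
\end{itemize}

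\textbf{Estimating the two deterministic sums.} Note that $(1+\beta)/(1-\beta)=1+2\gamma$. Set
\[
S_1(\lambda)=\int_1^{\lambda T}\int_{|u|\le s^{\kappa}} s^{-1-2\gamma}u^2\,N(\di s,\di u).
\]
Its expectation is
\[
\int_1^{\lambda T}s^{-1-2\gamma}\,C s^{\kappa(2-\alpha)}\,\di s .
\]
If I take $\kappa=1/\alpha$, the exponent becomes $-1-2\gamma+2/\alpha-1$, and the expectation is of order $\lambda^{2/\alpha-1-2\gamma}$ (or logarithmic). I need this to be $o(\lambda^{\rho/\alpha})=o(\lambda^{1/\alpha-\gamma})$. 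This requires $1/\alpha-1-\gamma<0$, i.e.\ $1/\alpha<1/(1-\beta)$, i.e.\ $\beta>1-\alpha$. That is exactly the lower constraint, and I would keep a tiny margin by taking $\kappa=1/\alpha+\delta$ with $\delta$ small. In the borderline case where the exponent $-1-2\gamma+\kappa(2-\alpha)$ is $\le -1$, the sum is bounded, which is fine.

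By Lemma~\ref{le:deltasmall1}, with that same $\delta$, a.s.\ only finitely many jumps satisfy $|u|>s^{1/\alpha+\delta}$. So the large-jump part $S_2$ is a finite random sum and is again negligible after multiplying by $\lambda^{-\rho/\alpha}$. Markov's inequality on $S_1$ then gives the convergence in probability.

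\textbf{Main obstacle.} The delicate point is balancing $\delta$. Lemma~\ref{le:deltasmall1} needs some $\delta>0$, and the small-jump bound needs $(1/\alpha+\delta)(2-\alpha)-2\gamma-1<\rho/\alpha$ (ignoring the boundary case). At $\delta=0$ this reads $1/\alpha-1-\gamma<0$, which is strict by $\beta>1-\alpha$, so a small $\delta>0$ works. I would also note that $\rho>0$ (from $\beta<1/(1+\alpha)$) is exactly what makes the initial finite piece vanish after scaling. Uniformity in $t\le T$ is automatic from the monotone bound on the absolute integrand.
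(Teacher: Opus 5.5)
Your proposal is correct and follows the paper's proof essentially step for step. You reduce to the integral of the absolute value, discard an a.s.\ finite random piece using $\rho>0$, bound the remainder by $C_\omega\int_1^{\lambda T}\int_{\mathbb{R}} s^{-(1+\beta)/(1-\beta)}u^2\mathbbm{1}_{|u|\le s^{1/\alpha+\delta}}N(\di s,\di u)$, and control this by its expectation using $\beta>1-\alpha$. The only structural difference is that the paper absorbs your large-jump piece $S_2$ into the initial piece by taking $\tau\ge\tau_\delta$ (and $\tau\ge 1$, which your $S_1$ also needs).

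There are two harmless slips. First, in your ``main obstacle'' paragraph the condition should read $(1/\alpha+\delta)(2-\alpha)-2\gamma<\rho/\alpha$: what you wrote is the integrand's exponent, with an extra $-1$. Your conclusion $1/\alpha-1-\gamma<0$ is nonetheless the correct condition. Second, when that integrand exponent equals $-1$, the expectation grows like $\log\lambda$ rather than staying bounded; this is still $o(\lambda^{\rho/\alpha})$.
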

\begin{proof} 
    We need to prove that for all $T<\infty$, 
    \[      \lambda^{-\frac{\rho}{\alpha} }  \sup_{t\in[0,T]}|\int_0^{\lambda t}\int_\mathbb{R}\left( g(X(s-)+u) - g(X(s-)) -g'(X(s-))u\right) N(\di s, \di u)|\underset{\lambda \to \infty}{\overset{\mathbb{P}}{\longrightarrow}}0,
    \]
    for which it suffices to prove that 
    \[      \lambda^{-\frac{\rho}{\alpha} } \int_0^{\lambda T}\int_\mathbb{R}\left| g(X(s-)+u) - g(X(s-)) -g'(X(s-))u\right| N(\di s, \di u)\underset{\lambda \to \infty}{\overset{\mathbb{P}}{\longrightarrow}}0.
    \]
    
    Let $C_\omega$ be the random variable from Corollary \ref{cor:bounds_jumps}, which is almost surely finite. Let also $\tau_C$ be an almost surely finite random time (which is not a stopping time) such that for all $(s,u)\in N$ with $s\geq \tau_C$, it holds  
    $|g(X(s-)+u) - g(X(s-)) -g'(X(s-))u|\leq (C_\omega+1)s^{\frac{-1-\beta}{1-\beta}}u^2$.
    Fix $\delta\in(0,\delta_{\alpha,\beta})$ sufficiently small (the value of $\delta_{\alpha,\beta}$ is given below), and let $\tau_\delta$ be such that for all $(s,u)\in N$ with $s\geq \tau_\delta$, it holds $|u| /s^{1/\alpha+\delta}\leq 1$.
    Then $\tau_\delta$ is almost surely finite by Lemma \ref{le:deltasmall1}.  We let $\tau=\max(\tau_C,\tau_\delta,1)$. 
    
    Let $U=\sup\{ |u|: \exists s \leq \tau: (s,u)\in N \}$. Since $\tau$ is almost surely finite, so is $U$. 
    Since furthermore $\|X\|_{\infty,[0,\tau]}<\infty$ almost surely and $g$ is $\mathcal{C}^2$, we deduce that almost surely, the sum 
    \[
    \int_0^{\tau }\int_\mathbb{R}\left| g(X(s-)+u) - g(X(s-)) -g'(X(s-))u\right| N(\di s, \di u)\]
    is finite.
    Hence, 
    \[  
    \lambda^{-\frac{\rho}{\alpha} }
    \int_0^{\tau }\int_\mathbb{R}\left| g(X(s-)+u) - g(X(s-)) -g'(X(s-))u\right| N(\di s, \di u)\underset{\lambda\to \infty}{\longrightarrow} 0 \qquad \text{almost surely}.
    \]
    We now consider 
    \[ 
    I_{\lambda,T}\coloneqq \lambda^{-\frac{\rho}{\alpha} }
    \int_{\tau\wedge \lambda T}^{\lambda T}   \int_\mathbb{R}\left| g(X(s-)+u) - g(X(s-)) -g'(X(s-))u\right| N(\di s, \di u).
    \]
    Since $\tau\geq \tau_C$, for all $(s,u)\in N$ with $s\geq \tau$, 
    \[\left| g(X(s-)+u) - g(X(s-)) -g'(X(s-))u\right| 
    \leq (C_\omega+1) s^{\frac{-1-\beta}{1-\beta}}u^2.
    \]
    Furthermore, since $\tau\geq \tau_\delta$, for all $(s,u)\in N$ with $s\geq \tau$, it holds  $\mathbbm{1}_{|u|\leq s^{\frac{1}{\alpha}+\delta }}=1$. 
    Thus, 
    \[ 
    I_{\lambda,T} 
    \leq (C_\omega+1)
    \lambda^{-\frac{\rho}{\alpha} }
    \int_{1\wedge \lambda T}^{\lambda T}   \int_\mathbb{R}   s^{\frac{-1-\beta}{1-\beta}}u^2 \mathbbm{1}_{|u|\leq s^{\frac{1}{\alpha}+\delta }  } N(\di s, \di u).
    \]
    For $\lambda\geq 1/T$, 
    \begin{align*}
    \mathbb{E}\Big[
    \lambda^{-\frac{\rho}{\alpha} }
    \int_{1}^{\lambda T}   \int_\mathbb{R}   s^{\frac{-1-\beta}{1-\beta}}u^2 \mathbbm{1}_{|u|\leq s^{\frac{1}{\alpha}+\delta }  } N(\di s, \di u)
    \Big]
    &=
    \lambda^{-\frac{\rho}{\alpha} }
    \int_{1}^{\lambda T}  s^{\frac{-1-\beta}{1-\beta}}  \int_{-s^{\frac{1}{\alpha}+\delta }}^{s^{\frac{1}{\alpha}+\delta } }   u^2 \frac{c_{\pm}(u) \di u \ \di s}{u^{1+\alpha}}\\
    &= \frac{(c_++c_-) \lambda^{-\frac{\rho}{\alpha} } }{2-\alpha}
    \int_{1}^{\lambda T}  s^{\frac{-1-\beta}{1-\beta} + (2-\alpha)(1/\alpha+\delta)  } \di s\\
    &= \frac{(c_++c_-) \lambda^{-\frac{\rho}{\alpha} } }{2-\alpha}
    \int_{1}^{\lambda T}  s^{  2/\alpha-2/(1-\beta)    + (2-\alpha)\delta  } \di s\\
    &\leq C_{T,\delta}  \lambda^{-\frac{\rho}{\alpha} }    (1+ \lambda^{1+2/\alpha-2/(1-\beta)    + (2-\alpha)\delta     } ). 
    \end{align*}
    In the allowed range for $\beta$, the exponent $1+2/\alpha-2/(1-\beta)$ can be either positive or negative. Recall $-\frac{\rho}{\alpha} =-\frac{1}{\alpha}+  \frac{ \beta}{1-\beta}$ and $\rho>0$. So
    \[
    -\frac{\rho}{\alpha}+  1+\frac2\alpha-\frac{2}{1-\beta}=
-\frac{1}{\alpha}+  \frac{ \beta}{1-\beta}+1+\frac2\alpha-\frac{2}{1-\beta}=\frac{1-\alpha-\beta}{\alpha(1-\beta)}<0.
    \]
    Hence 
    \[ \lambda^{-\frac{\rho}{\alpha} }    (1+ \lambda^{1+2/\alpha-2/(1-\beta)    + (2-\alpha)\delta     } )\to0,\quad\lambda\to\infty\]
    for sufficiently small $\delta>0$, thus 
    \[
    \mathbb{E}\Big[
    \lambda^{-\frac{1}{\alpha}+  \frac{ \beta}{1-\beta}}
    \int_{1}^{\lambda T}   \int_\mathbb{R}   s^{\frac{-1-\beta}{1-\beta}}u^2 \mathbbm{1}_{|u|\leq s^{\frac{1}{\alpha}+\delta }  } N(\di s, \di u)
    \Big]\underset{\lambda\to \infty}\longrightarrow 0.\]
    In particular, 
    \[
    \lambda^{-\frac{1}{\alpha}+  \frac{ \beta}{1-\beta}}
    \int_{1}^{\lambda T}   \int_\mathbb{R}   s^{\frac{-1-\beta}{1-\beta}}u^2 \mathbbm{1}_{|u|\leq s^{\frac{1}{\alpha}+\delta }  } N(\di s, \di u)\underset{\lambda\to \infty}{\overset{\mathbb{P}}{\longrightarrow}} 0,
    \]
    and it follows $I_{\lambda,T}$ goes to $0$ in probability, which concludes the proof of the proposition. 
\end{proof}

\begin{proposition}
    \label{prop:fluct}
    Assume $1-\alpha<\beta<\frac{1}{\alpha+1}$. Then  the family of stochastic processes indexed by $\lambda>0$,
    \[t \mapsto  \lambda^{-\frac{\rho}{\alpha}} \int_0^{\lambda t } g'(X(s-))\di B_\alpha(s), \]
    converges in distribution, locally uniformly as $\lambda\to \infty$, 
    toward 
    \[ 
    t\mapsto   a^{-1}c_1^{-\beta}  \rho^{-1/\alpha} B_\alpha( t^\rho).
    \]
\end{proposition}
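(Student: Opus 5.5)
Since $g'(X(s-)) = 1/f(X(s-))$ for $X(s-)\ge x_0$ and $X(s)\sim c_1 s^{1/(1-\beta)}$, we have $g'(X(s-))\sim a^{-1}c_1^{-\beta} s^{-\gamma}$ as $s\to\infty$. The plan is therefore to compare the stochastic integral with $a^{-1}c_1^{-\beta} Y(\lambda t)$. By Lemma~\ref{coro:scale} and self-similarity of $B_\alpha$, $\lambda^{-\rho/\alpha} Y(\lambda t)$ has the law of $B_\alpha((\lambda t)^\rho/\rho)\lambda^{-\rho/\alpha}\overset{(d)}{=} \rho^{-1/\alpha}B_\alpha(t^\rho)$ as a process. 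So it suffices to show that
\[
\lambda^{-\rho/\alpha}\sup_{t\le T}\Big|\int_0^{\lambda t} H(s)\,\di B_\alpha(s)\Big|\overset{\mathbb{P}}{\longrightarrow}0,\qquad H(s)\coloneqq g'(X(s-))-a^{-1}c_1^{-\beta}s^{-\gamma}.
\]
Two quantitative inputs are needed. First, $g'(x) = a^{-1}x^{-\beta}(1+o(x^{-r}))$ follows from $f(x)=ax^\beta+o(x^{\beta-r})$. Second, and more delicately, $X(s)$ must be compared with $c_1 s^{1/(1-\beta)}$ at a rate: writing $g(X(s))=s+E(s)$, we need $|E(s)|=O(s^{\rho/\alpha+\delta})$ a.s. This follows from \eqref{eq:ito}. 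The drift term is $s+O(1)$. The jump-correction term is $o(s^{\rho/\alpha+\delta})$ by the computation of Proposition~\ref{prop:error}, which in fact gives an a.s.\ polynomial bound through a Borel--Cantelli argument on dyadic scales. The martingale term $\int g'(X)\,\di B_\alpha$ has an integrand bounded by $C s^{-\gamma}$ eventually, and its growth can be bounded by $s^{\rho/\alpha+\delta}$ through dyadic truncated moment estimates.

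With these bounds, and with $g(x)=(x^{1-\beta}/(a(1-\beta)))(1+o(x^{-r}))$ obtained by integrating the first input, inverting gives $X(s)^{1-\beta}=a(1-\beta)s\,(1+O(s^{\rho/\alpha-1+\delta})+o(s^{-r/(1-\beta)}))$. Hence $|H(s)|\le s^{-\gamma}\varepsilon(s)$, where $\varepsilon(s)=O(s^{-\eta})$ for some $\eta>0$ once $s\ge\tau$, with $\tau$ a random a.s.\ finite time. This uses $\rho/\alpha<1$, which is equivalent to $\beta>1-\alpha$ by the equivalences noted earlier. The $o(\cdot)$ condition with exponent $r$ is exactly what makes the deterministic correction negligible at the scale $\lambda^{\rho/\alpha}$.

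To control the integral with this small integrand, I would stop rather than work on a random time that is not a stopping time. For $M,s_0$, let $\sigma$ be the first time $s\ge s_0$ at which $|H(s)|>M s^{-\gamma-\eta}$; then $\mathbb{P}(\sigma<\infty)\to0$ as $s_0\to\infty$ and then $M\to\infty$. The portion on $[0,s_0]$ is a fixed finite quantity multiplied by $\lambda^{-\rho/\alpha}\to0$. On $[s_0,\sigma\wedge\lambda T]$, the integrand $K=H\mathbbm 1_{s\le\sigma}$ is predictable with $|K(s)|\le Ms^{-\gamma-\eta}$. I would split the stable integral into jumps $|u|\le s^{1/\alpha}$ (a compensated $L^2$ martingale, handled by Doob) and jumps $|u|>s^{1/\alpha}$. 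For the large jumps, take the $L^p$ first-moment bound with some $p<\alpha$, or $p=1$ when $\alpha>1$, including compensators; for $\alpha<1$ use the uncompensated form. In each case the result is of order $\lambda^{\rho/\alpha-\eta}$, giving convergence in probability of the supremum to $0$.

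The main obstacle is the rate bound $|g(X(s))-s|=O(s^{\rho/\alpha+\delta})$ a.s. It is somewhat circular, since it uses the size of the same stochastic integral, but only a crude a.s.\ growth bound is needed there, not the limit law, so the circularity is harmless.
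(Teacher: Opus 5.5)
Your argument is correct. After the common first step (comparing with $a^{-1}c_1^{-\beta}Y(\lambda t)$ via Lemma~\ref{coro:scale} and self-similarity), it takes a noticeably heavier route than the paper.

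You control the error integrand $H$ by first proving a polynomial rate $|H(s)|\le M s^{-\gamma-\eta}$. This costs you three things: an a.s.\ growth bound $|g(X(s))-s|=O(s^{\rho/\alpha+\delta})$ (Borel--Cantelli for both the jump-correction term and $\int g'(X)\,\di B_\alpha$); the second-order hypothesis $f(x)=ax^\beta+o(x^{\beta-r})$; and, through Proposition~\ref{prop:error}, the condition $(\ln f)'=O(1/x)$.

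The paper uses none of this. It only needs $Z(s)=g'(X(s-))s^{\gamma}-a^{-1}c_1^{-\beta}\to0$ a.s., i.e.\ $f\sim ax^\beta$ together with \eqref{eq:asympt_power}. For fixed $\epsilon$, it replaces the integrand on $[\theta,\lambda t]$ by the predictable truncation $\mathbbm{1}_{|Z(s)|\le\epsilon}Z(s)s^{-\gamma}$, which coincides with it on $\{\tau\le\theta\}$. It then rescales time by $\lambda$ to get $\int_{\theta/\lambda}^{t}(\cdots)\,s^{-\gamma}\,\di B'(s)$ with $B'\overset{(d)}{=}B_\alpha$. This is $O(\epsilon)$ in probability \emph{uniformly in $\lambda$}:
\begin{itemize}
\item if $\alpha<1$, pathwise by $\epsilon(Y^++Y^-)$;
\item if $\alpha>1$, by an $L^2$ bound for jumps $|u|\le s^{\gamma}$ plus an $L^1$ bound for the larger ones.
\end{itemize}
All these bounds are finite precisely because $\alpha\gamma<1$; then $\epsilon\to0$.

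So the step you flag as the main obstacle can be bypassed entirely. An $o(1)$ relative error suffices, because after rescaling $s^{-\gamma}$ is an admissible integrand near $0$. Your route buys a quantitative error bound, $O_{\mathbb P}(\lambda^{-\eta})$. The paper's route buys brevity, and the proposition holds under first-order assumptions only.

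Two minor remarks.
\begin{itemize}
\item Within your scheme, any rate $f=ax^\beta+o(x^{\beta-r'})$ with $r'>0$ would do. The specific exponent $r$ (for which $1-r/(1-\beta)=\rho/\alpha$) matters only later, in \eqref{eq:tempgasymp}, when passing from $g(X)$ to $X^{1-\beta}$.
\item For $\alpha<1$, the small-jump part $\int K(s)\,u\,\mathbbm{1}_{|u|\le s^{1/\alpha}}\,N(\di s,\di u)$ is not a martingale when $c_+\neq c_-$. Compensating it produces the drift $\int K(s)\,(c_+-c_-)\,s^{1/\alpha-1}(1-\alpha)^{-1}\,\di s$, which is also $O(\lambda^{\rho/\alpha-\eta})$ and hence harmless.
\end{itemize}
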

\begin{proof} 
    Recall $\gamma=\beta/(1-\beta)$, $1/\alpha-\gamma=-\rho/\alpha<0$, and $\alpha \gamma<1$. 
    By the scale invariance $B_\alpha( \ \cdot \ \lambda^\rho/\rho)   \overset{(d)}= \lambda^{\rho/\alpha} B_\alpha(\cdot)/\rho^{1/\alpha} $ 
    and Lemma~\ref{coro:scale}, 
    we deduce the equalities in distribution $$(Y( \lambda t ))_{t\in[0,\infty)} \overset{(d)}= (B_\alpha  (  (\lambda t)^\rho/\rho ))_{t\in[0,\infty)}
   \overset{(d)}=   \lambda^{1/\alpha-\gamma}  (B_\alpha(t^\rho))_{t\in[0,\infty)}/\rho^{1/\alpha}.
    $$ Thus, it suffices to show that, locally uniformly in $t$, as $\lambda\to \infty$,
    \[ \lambda^{-\frac{1}{\alpha}+  \gamma} \Big( \int_0^{\lambda t } g'(X(s-))\di B_\alpha(s)
    -        a^{-1}c_1^{-\beta} Y(\lambda t) \Big) 
    \overset{\mathbb{P}}\longrightarrow 0,\]
    i.e. that
    \[ \forall T>0\quad \sup_{t\leq T} |J_{\lambda,t}| \underset{\lambda\to \infty}{ \overset{\mathbb{P}}\longrightarrow} 0, \qquad \text{where} \qquad
    J_{\lambda,t}\coloneqq  \lambda^{-\frac{1}{\alpha}+  \gamma}\int_0^{\lambda t }  \big( g'(X(s-))
    -  a^{-1}c_1^{-\beta} s^{-\gamma}
    \big) \di B_\alpha(s).
    \]
    
    Since $g'(x)=1/f(x)\sim a^{-1} x^{-\beta}$ and almost surely $X(t)\sim c_1 t^{\frac1{1-\beta}}$, 
    \[ 
    g'(X(s-)) s^{\gamma } \underset{s \to \infty}\longrightarrow  a^{-1}c_1^{-\beta}  \quad a.s.
    \]
    Fix $\epsilon>0$ and let $\tau$ be an almost surely finite random time such that for $s\geq \tau$, 
    \[ 
    Z(s)\coloneqq g'(X(s-))s^{\gamma} 
    - a^{-1}c_1^{-\beta}   \in [-\epsilon, \epsilon].
    \]
    Since $\tau$ is almost surely finite, it suffices to show that $\sup_{t\leq T}
    |  J_{\lambda,t} \mathbbm{1}_{\tau \leq \theta}|\overset{\mathbb{P}}\longrightarrow 0$ for all deterministic $\theta\in(0,\infty)$.
    Remark
    \begin{align*}
    |J_{\lambda,t} \mathbbm{1}_{\tau \leq \theta}|
    &\leq 
    \lambda^{-\frac{1}{\alpha}+  \gamma } \Big|\int_0^{\theta }  (g'(X(s-))
    -  a^{-1}c_1^{-\beta}  s^{-\beta/(1+\beta)}) 
    \di B_\alpha(s) \Big|\\
    &+  \lambda^{-\frac{1}{\alpha}+  \gamma}\Big| \int_\theta^{\lambda t } \mathbbm{1}_{ |Z(s)|\leq \epsilon  } Z(s)s^{-\gamma}
    \di B_\alpha(s)\Big|,
    \end{align*}
    which is easily seen by considering separately the events $\tau \leq \theta$ and $\tau > \theta$ (the last integral is well defined, since $Z$ is predictable).
    Thus, it suffices to prove that 
    \begin{equation}
    \label{eq:temp:asfin}
    \text {almost surely, } \qquad \int_0^{\theta}  (g'(X(s-))
    -  a^{-1} c_1^{-\beta} s^{-\gamma}) 
    \di B_\alpha(s) \qquad \text{is well defined and finite;}
    \end{equation} 
    and that there exists $C_T$ such that for all $\epsilon'>0$, there exist  $\lambda_0$ and $\epsilon>0$ such that for all $\lambda\geq \lambda_0$,
    \begin{equation}
    \label{eq:temp:L2small}
    \mathbb{P}\Big(  \sup_{t<T} \Big|  \lambda^{-\frac{1}{\alpha}+  \gamma} \int_\theta^{\lambda t } \mathbbm{1}_{ |Z(s)|\leq \epsilon  } Z(s)s^{-\gamma}
    \di B_\alpha(s)\Big| \geq  C_T \epsilon'\Big) \leq \epsilon'.
    \end{equation}
    Remark although we used the time $\tau$ which is not a stopping time as an intermediate, we are left with integrals of adapted processes in the end. 
    Since $g'\circ X$ is an adapted process which is almost surely bounded on $[0,\theta]$, the integral 
    \[ 
    \int_0^{\theta}  g'(X(s-))\di B_\alpha(s)
    \]is well-defined and almost surely finite.
    Furthermore 
    $
    \int_0^{\theta}  s^{-\gamma} \di B_\alpha(s)= Y(\theta)$,
    which we have already seen is almost surely finite. Thus \eqref{eq:temp:asfin} holds. 

    To prove \eqref{eq:temp:L2small}, let $B':u\mapsto \lambda^{\frac{1}{\alpha}} B_\alpha(\lambda u)$, which is equal to $B_\alpha$ in distribution. With the change of variable $s'=\lambda^{-1}s$, we get  
    \begin{align*}
    \lambda^{-\frac{1}{\alpha}+\gamma} \int_\theta^{\lambda t } \mathbbm{1}_{ |Z(s)|\leq \epsilon  } Z(s)s^{-\gamma}
    \di B_\alpha(s)
    &=   \int_{\lambda^{-1}\theta}^{t } Z(\lambda s)\mathbbm{1}_{Z(\lambda s)\leq \epsilon} s^{-\gamma} \di B'(s).
    \end{align*}

  We first treat the case $\alpha<1$. In this case, we can decompose $B'$ as $B'=B^+-B^-$, where \[B^+(t)= \int_{[0,t]\times [0,\infty)}  u N(\di s , \di u), \qquad 
B^-(t)= \int_{[0,t]\times (-\infty, 0]}  |u| N(\di s , \di u).\]
Thus, 
\begin{align*}
\Big|\int_{\lambda^{-1}\theta}^{ t } \mathbbm{1}_{ |Z(s)|\leq \epsilon  } Z(s)s^{-\gamma}
    \di B'(s)\Big| 
& \leq \epsilon \int_0^{ t} s^{-\gamma}
    (\di B^+(s)+\di B^-(s) ) = \epsilon (Y^+(t)+Y^-(t)), 
\end{align*}
where $Y^\pm$ are the process defined as $Y$ but for the stable processes $B^\pm$ instead of $B_\alpha$.  
  In particular, it follows from Lemma \ref{coro:scale} that these are well-defined càdlàg and almost surely finite processes, so that 
  \[     \mathbb{P}\Big(  \sup_{t<T} \Big|  \lambda^{-\frac{1}{\alpha}+  \gamma} \int_\theta^{\lambda t } \mathbbm{1}_{ |Z(s)|\leq \epsilon  } Z(s)s^{-\gamma}
    \di B_\alpha(s)\Big| \geq  C_T \epsilon'\Big)
    \leq \mathbb{P}(   \sup_{t<T} (Y^+(t)+Y^-(t) )\geq C_T \epsilon'/\epsilon   ),
  \]
which can be made arbitrarily small by taking $\epsilon$ small enough, which concludes in the case $\alpha<1$.

We now assume $\alpha>1$, and we  decompose $B'$ into 
    \begin{align*}
    B'(t)
    &=     \int_0^t \int_{\mathbb{R} }u \mathbbm{1}_{|u|\leq s^\gamma} \tilde{N}(\di s, \di u)
+ \int_0^t  \int_{\mathbb{R} } u \mathbbm{1}_{|u|>s^\gamma }  \tilde{N}(\di s, \di u)
   & \eqcolon B'_1(t)+B'_2(t),
    \end{align*}
    where $\tilde{N}$ is the compensated measure, $\tilde{N}(\di s, \di u)=N(\di s, \di u) - \frac{c_{\pm}(u)  }{|u|^{1+\alpha}}\di s \di u $, and where the integrals are understood in the classical sense for compensated Poisson measures (see e.g.~\cite[Ch. 4]{Sato}). 
    
  On the one hand, by Itô isometry applied to the process $B'_1$, 
    \begin{align*}
    \mathbb{E}\Big[ \Big( \int_{\lambda^{-1}\theta}^{T}  \mathbbm{1}_{ |Z(\lambda s)|\leq \epsilon  } Z(\lambda s)s^{-\gamma}
    \di B'_1(s) \Big)^2 \Big]
    &=
    \int_{\lambda^{-1}\theta}^{T} \int_{-s^\gamma}^{s^\gamma} \mathbb{E}[Z(\lambda s)^2\mathbbm{1}_{Z(\lambda s)\leq \epsilon}] s^{-2\gamma} u^2 \frac{c_\pm(u) \di u \di s }{|u|^{1+\alpha}} \\
        &\leq
 \epsilon^2   \int_{0}^{T} \int_{-s^\gamma}^{s^\gamma}  s^{-2\gamma} u^2 \frac{c_\pm(u) \di u \di s }{|u|^{1+\alpha}} \\
    &= \epsilon^2 \frac{c_++c_-}{2-\alpha}  \int_0^T s^{-\alpha \gamma} \di s = C_1 \epsilon^2.
    \end{align*}

On the other hand,  
    \begin{align*}
    \mathbb{E}\Big[ \Big| \int_{\lambda^{-1}\theta}^{T}  \mathbbm{1}_{ |Z(\lambda s)|\leq \epsilon  } Z(\lambda s) s^{-\gamma}
    \di B'_2(s)  \Big| \Big]
    & \leq 
    (c_++c_-) \epsilon \int_0^T \int_{s^\gamma}^{\infty} s^{-\gamma}
    u  \frac{\di u \di s }{|u|^{1+\alpha}} \\
    & =
    (c_++c_-)\epsilon \int_0^T  s^{-\gamma} \frac{s^{\gamma(1-\alpha)}  } {\alpha-1}   \di s\\
    &= \frac{(c_++c_-) \epsilon }{(\alpha-1)( 1-\alpha \gamma  ) } T^{1-\gamma \alpha}=C_2\epsilon. 
    \end{align*}
Remark both $C_1$ and $C_2$ are finite because  $\alpha>1$ and $\alpha \gamma<1$.
    

By triangle inequality we obtain
    \begin{align*}
    \mathbb{P}\Big(  \sup_{t\leq T} \Big|  \lambda^{-\frac{1}{\alpha}+  \gamma} \int_\theta^{\lambda t }\mathbbm{1}_{ |Z(s)|\leq \epsilon  } Z(s)s^{-\gamma} \di B_\alpha(s)\Big| \geq  & \epsilon'\Big)  
    \leq \\
    \mathbb{P}\Big( \sup_{t\leq T} \Big| \int_{\lambda^{-1}\theta}^{T}  \mathbbm{1}_{ |Z(\lambda s)|\leq \epsilon  } Z(\lambda s) s^{-\gamma}\di B'_1(s)  \Big|  \geq \frac{\epsilon'}{2}\Big)
    &+\mathbb{P}\Big(  \sup_{t\leq T} \Big| \int_{\lambda^{-1}\theta}^{T}  \mathbbm{1}_{ |Z(\lambda s)|\leq \epsilon  } Z(\lambda s) s^{-\gamma} \di B'_2(s)  \Big|  \geq \frac{\epsilon'}{2}\Big).
    \end{align*}
Using Doob's martingale inequality and Markov's inequality, we thus get 
    \begin{align*}
   & \mathbb{P}\Big(  \sup_{t\leq T} \Big|  \lambda^{-\frac{1}{\alpha}+  \gamma} \int_\theta^{\lambda t }\mathbbm{1}_{ |Z(s)|\leq \epsilon  } Z(s)s^{-\gamma} \di B_\alpha(s)\Big| \geq   \epsilon'\Big)  
    \\
    & \leq 4\epsilon'^{-2}		\mathbb{E}\Big[ \Big( \int_{\lambda^{-1}\theta}^{T }    \mathbbm{1}_{ |Z(\lambda s)|\leq \epsilon  } Z(\lambda s)s^{-\gamma}
    \di B'_1(s)\Big)^2 \Big] +2    \epsilon'^{-1}		\mathbb{E}\Big[ \Big| \int_{\lambda^{-1}\theta}^{T} \mathbbm{1}_{ |Z(\lambda s)|\leq \epsilon  } Z(\lambda s) s^{-\gamma}
    \di B'_2(s)\Big| \Big]   \\
    &	\leq 4 C_1(\epsilon/\epsilon')^2+2C_2 \epsilon/\epsilon',
    \end{align*}
    which is smaller than $\epsilon'$ provided $\epsilon$ is sufficiently small,  concluding the proof. 
\end{proof}

\begin{corollary}
\label{coro:cvg}
    As $\lambda \to \infty$, 
    the process 
    $t\mapsto \lambda^{-\frac{\rho}{\alpha}} (g(X(\lambda t))-\lambda t)$ converges in distribution, locally uniformly, toward  
    \[ 
    U: t\mapsto    a^{-1}c_1^{-\beta}  \rho^{-1/\alpha} B_\alpha( t^\rho).
    \]
\end{corollary}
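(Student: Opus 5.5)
The plan is to decompose $g(X(\lambda t))-\lambda t$ using the Itô formula~\eqref{eq:ito} and to handle each term with the results already established. Let $\tau'$ be an almost surely finite random time with $X(s)\geq x_0$ for all $s\geq \tau'$; it exists by~\eqref{eq:assumption}. For $s\geq\tau'$ we have $g'(X(s))f(X(s))=1$. Hence the drift term satisfies
\[
\int_0^{\lambda t} g'(X(s))f(X(s))\,\di s=\lambda t+R_\lambda(t),
\qquad
R_\lambda(t)\coloneqq\int_0^{\lambda t\wedge\tau'}\bigl(g'(X(s))f(X(s))-1\bigr)\,\di s .
\]
Since $X$ is càdlàg and $g\in\mathcal{C}^2$, the remainder satisfies $\sup_{t\geq0}|R_\lambda(t)|\leq \int_0^{\tau'}|g'(X(s))f(X(s))-1|\,\di s<\infty$ almost surely, uniformly in $\lambda$. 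The initial term $g(X(0))$ is also an almost surely finite random variable that does not depend on $\lambda$.

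Multiplying by $\lambda^{-\rho/\alpha}$, where $\rho>0$, sends $g(X(0))+R_\lambda(\cdot)$ to $0$ uniformly on $[0,\infty)$, almost surely. This gives
\[
\lambda^{-\frac{\rho}{\alpha}}\bigl(g(X(\lambda t))-\lambda t\bigr)
= \lambda^{-\frac{\rho}{\alpha}}\int_0^{\lambda t} g'(X(s-))\,\di B_\alpha(s) + E_\lambda(t)+o(1),
\]
where $E_\lambda$ is the rescaled jump-correction term. Proposition~\ref{prop:error} shows that $E_\lambda\to0$ in probability, locally uniformly. Proposition~\ref{prop:fluct} shows that the stochastic-integral term converges in distribution, locally uniformly, to $U$.

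The conclusion then follows from a Slutsky-type argument in the space of càdlàg functions with the local uniform topology. Take the metric $d(x,y)=\sum_{n}2^{-n}\bigl(1\wedge\sup_{[0,n]}|x-y|\bigr)$. If $A_\lambda\Rightarrow U$ and $d(A_\lambda+E'_\lambda,A_\lambda)\to0$ in probability, then $A_\lambda+E'_\lambda\Rightarrow U$ (e.g.\ Billingsley, Thm.~3.1). Here $E'_\lambda$ is the sum of the vanishing terms above, and $d(A_\lambda+E'_\lambda,A_\lambda)\leq d(E'_\lambda,0)\to0$ in probability.

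The only point needing care is measurability: the space is non-separable under the uniform topology. I would handle it by working with the Skorokhod-type metric above restricted to the relevant events, or simply note that $U$ is continuous in $t$, so the standard Slutsky argument applies. I do not expect a substantial obstacle.
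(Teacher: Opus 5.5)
Your argument is correct and is essentially the paper's proof: apply Itô's formula, note that the drift equals $\lambda t$ plus an almost surely bounded random correction, then combine Propositions~\ref{prop:error} and~\ref{prop:fluct} via a Slutsky step. Your $R_\lambda$ over $[0,\tau']$ is slightly cleaner than the paper's detour through a deterministic $t_0$ and the event $\{\tau\le t_0\}$, since it holds for all $t$ at once.

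One aside is wrong, though: $U$ is not continuous, because it is a time-changed $\alpha$-stable process and has jumps, so drop that remark. The paper does not address measurability either. What the two propositions actually give is that the rescaled process lies within $o_{\mathbb{P}}(1)$, in the local uniform metric, of a process having exactly the law of $U$ for every $\lambda$.
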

\begin{proof}
    Let $\tau\coloneqq \inf\{ t: \forall t'\geq t, X(t')\geq x_0+1\}$, which is almost surely finite by assumption. 
    Let $t_0>0$ deterministic. 
Almost surely on the event $\tau\leq t_0$, we have $\int_{t_0}^t g'(X(s)) f(X(s)) \di t=t-t_0$. By \eqref{eq:ito}, we get 
\begin{align*}
    g(X(t))    = t-t_0 & +g(X(t_0)) +\int_{t_0}^t g'(X(s-)) \di  B_\alpha(t) \\
   & + 	\int_{t_0}^t\int_\mathbb{R}\left( g(X(s-)+u) - g(X(s-)) -g'(X(s-))u\right) N(\di s, \di u).
\end{align*}
Since this holds for $t_0$ arbitrary, we deduce that almost surely, for all $t\geq \tau$,
    \[ 
    g(X(t))=C+ t
    +\int_{0}^t g'(X(s)) \di  B_\alpha(t)
    + 	\int_{0}^t\int_\mathbb{R}\left( g(X(s-)+u) - g(X(s-)) -g'(X(s-))u\right) N(\di s, \di u),
    \]
    where $C$ is an almost surely finite random variable. 
    We conclude by applying Propositions \ref{prop:error} and \ref{prop:fluct}. 
\end{proof}

\subsubsection{Conclusion of the proof of Theorem \ref{th:main1}}
We can now prove our main result.
    The condition on $f$ ensures that 
    $
    g(x)= \frac{x^{1-\beta}}{a (1-\beta)} +o(x^{1-\beta- r })+O(1)$ as $x\to +\infty$.
    Thus, as $x\to \infty$ and $\lambda\to \infty$, 
    \[
    \lambda^{-\rho/\alpha} ( g(x)- \frac{ x^{1-\beta}}{a (1-\beta)} ) =o( \lambda^{-\rho/\alpha}  x^{1-\beta- r })+o(1).\]
    The value of $r$ is such that, using the asymptotic \eqref{eq:asympt_power}, we get for all $T<\infty$, almost surely, 
    \begin{equation}
    \label{eq:tempgasymp}
    \sup_{t\in [0,T]} |
    \lambda^{-\rho/\alpha} (g(X(\lambda t))-\frac{X(\lambda t)^{1-\beta}}{a (1-\beta)}) |  
    \underset{\lambda \to \infty}=o(
    \sup_{t\in [0,T]} | \lambda^{-\rho/\alpha}  X(\lambda t)^{1-\beta- r } |)+o(1) \underset{\lambda \to \infty}\longrightarrow 0. 
    \end{equation}
    By Corollary \ref{coro:cvg}, 
    $t\mapsto \lambda^{-\frac{\rho}{\alpha}} (g(X(\lambda t))-\lambda t)$ converges in distribution, locally uniformly, toward the process $U$ from Corollary \ref{coro:cvg}. 
    Using also \eqref{eq:tempgasymp}, 
    we deduce that the processes $t\mapsto \lambda^{-\rho/\alpha} \big( \frac{X(\lambda t)^{1-\beta}  }{a (1-\beta)} -\lambda t \big)$ converges in distribution, locally uniformly, toward  $U$.
    We now use the notation $A_{\lambda,t}=B_{\lambda,t}+ o(\lambda^r)$ as a shortcut notation for `` the process 
    $
    t\mapsto \lambda^{-r} (A_{\lambda,t}-B_{\lambda,t})$ converges in distribution, locally uniformly, toward $0$.''  Working with the Skorokhod's representation allows to work with almost sure convergence (of copies) and apply usual technics such as Taylor expansions.
    We have shown    
    \begin{equation} 
    \label{eq:asympX} 
    X(\lambda t)^{1-\beta}  =a (1-\beta)\lambda t+ a (1-\beta) U(t)\lambda^{\rho/\alpha}+o(\lambda^{\rho/\alpha})
    =a (1-\beta) \lambda t (1+  t^{-1} U(t)    \lambda^{\rho/\alpha-1}(1+o(1)) )
    .\end{equation}  
     It follows that, uniformly over $t\in [\delta,C]$ for arbitrary $0<\delta<C<\infty$, 
    \begin{align*} X(\lambda t)
    & =(a (1-\beta) \lambda t)^{1/(1-\beta)}  (1+  (1-\beta)^{-1} t^{-1} U(t)    \lambda^{\rho/\alpha-1}(1+o(1)) )\\
    &= (a (1-\beta)\lambda t)^{1/(1-\beta)} + (a (1-\beta)\lambda t)^{1/(1-\beta)} (1-\beta)^{-1} t^{-1} U(t)    \lambda^{\rho/\alpha-1}(1+o(1)),
    \end{align*}
    i.e. 
    \begin{align*}  
    X(\lambda t)  - (a (1-\beta) \lambda t)^{1/(1-\beta)}& =(a (1-\beta))^{1/(1-\beta)} (1-\beta)^{-1} t^{\frac{\beta}{1-\beta}} U(t)    \lambda^{1/(1-\beta)+ \rho/\alpha-1}(1+o(1))\\
    &= \rho^{-1/\alpha}  t^\gamma B_\alpha(t^\rho) \lambda^{\frac{1}{\alpha}}(1+o(1)).
    \end{align*}

It only remains to extends the convergent at $0$ in the case $\beta\geq0$. Recall \eqref{eq:asympX} and rearrange it  as follows
    \[ a (1-\beta) U(t)+o(1) =\lambda^{-\rho/\alpha}\left(X(\lambda t)^{1-\beta} -a (1-\beta)\lambda t\right)= \lambda^{1-\rho/\alpha}\left(\left(\frac{X(\lambda t)}{\lambda^{\frac{1}{^{1-\beta}}}}\right)^{1-\beta} -a (1-\beta) t\right).
    \] 
So
 \[
 \lambda^{1-\rho/\alpha}\left( \frac{X(\lambda t)}{\lambda^{\frac{1}{^{1-\beta}}}}  -(a (1-\beta) t)^{\frac{1}{1-\beta}}\right)=
 \lambda^{1-\rho/\alpha}\left(\left(\frac{X(\lambda t)}{\lambda^{\frac{1}{^{1-\beta}}}}\right)^{(1-\beta)\frac{1}{1-\beta}} -\left(a (1-\beta) t\right)^{(1-\beta)\frac{1}{1-\beta}} \right)=\]
 \[
\left(\left(\frac{X(\lambda t)}{\lambda^{\frac{1}{^{1-\beta}}}}\right)^{1-\beta} -a (1-\beta) t\right)    \frac{1}{1-\beta}\theta_\lambda
(a (1-\beta) t)^{\frac{\beta}{1-\beta}}=(a (1-\beta) U(t)+o(1)) \frac{1}{1-\beta}\theta_\lambda
(a (1-\beta) t)^{\frac{\beta}{1-\beta}},
 \] 
    where $\theta_\lambda(a (1-\beta) t)$ is a point between $a (1-\beta) t$ and $\left(\frac{X(\lambda t)}{\lambda^{\frac{1}{^{1-\beta}}}}\right)^{1-\beta}.$ 
    Notice that $\left(\frac{X(\lambda t)}{\lambda^{\frac{1}{^{1-\beta}}}}\right)^{1-\beta} $ 
   converges locally uniformly to $a (1-\beta) t$ as $\lambda\to\infty. $ Therefore
 \[
 \lambda^{-1/\alpha}\left(  {X(\lambda t)}   -(a (1-\beta) \lambda t)^{\frac{1}{1-\beta}}\right)= (a (1-\beta) U(t)+o(1)) \frac{1}{1-\beta} 
(a (1-\beta) t)^{\frac{\beta}{1-\beta}},\]    
which after expending the definition of $U(t)$ and simplifying the constants gives 
 \begin{align*}
 \lambda^{-1/\alpha}\left(  {X(\lambda t)}   -(a (1-\beta) \lambda t)^{\frac{1}{1-\beta}}\right)
 &= (a (1-\beta) U(t)+o(1)) \frac{1}{1-\beta} 
(a (1-\beta) t)^{\frac{\beta}{1-\beta}}\\
&= \rho^{-1/\alpha}  t^\gamma B_\alpha(t^\rho)  +t^{\frac{\beta}{1-\beta}} o(1).
\end{align*}
Since $\beta\geq 0$,
this concludes the proof of Theorem~\ref{th:main1}. 

\smallskip

It remains to state and prove the following result announced in Section~\ref{sec:Introduction}, which is analogues to the failure of the CLT given in~\cite[Prop.~6.6]{warwick194470}.

\begin{proposition}
\label{le:counter}
    Assume $\beta>\frac{1}{1+\alpha}$ instead. Then the family of random variables indexed by $\lambda>0$,
    \[ \frac{X(\lambda )-c_1 \lambda^\frac{1}{1-\beta}}{\lambda^{1/\alpha}}\] 
    does \textbf{not} converge in distribution as $\lambda\to+\infty$.
\end{proposition}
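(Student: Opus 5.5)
The plan is to show that for $\beta>\frac{1}{1+\alpha}$ the quantity $g(X(t))-t$ converges almost surely to a \emph{non-degenerate} random limit $L$. Then $X(\lambda)$ equals a deterministic quantity plus a random term of order $\lambda^{\gamma}L$, and since $\gamma>1/\alpha$ exactly when $\beta>\frac{1}{1+\alpha}$ (i.e.\ $\rho<0$, $\alpha\gamma>1$), dividing by $\lambda^{1/\alpha}$ cannot produce a tight family.

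\emph{Step 1: an almost sure limit for $g(X(t))-t$.} I will use the decomposition from the proof of Corollary~\ref{coro:cvg}. For $t\ge\tau$,
\[
g(X(t))=C+t+\int_0^t g'(X(s-))\,\di B_\alpha(s)+\int_0^t\!\!\int_{\mathbb{R}}\bigl(g(X(s-)+u)-g(X(s-))-g'(X(s-))u\bigr)N(\di s,\di u).
\]
The jump term converges absolutely almost surely as $t\to\infty$. Indeed, by Corollary~\ref{cor:bounds_jumps} and Lemma~\ref{le:deltasmall1}, after a random time it is dominated by $\int^\infty\!\!\int s^{-(1+\beta)/(1-\beta)}u^2\mathbbm 1_{|u|\le s^{1/\alpha+\delta}}N(\di s,\di u)$. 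This has finite expectation because the exponent $\frac{2}{\alpha}-\frac{2}{1-\beta}+(2-\alpha)\delta<-1$ for small $\delta$, using $\beta>1-\alpha$.

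For the stochastic integral, I write $g'(X(s-))=a^{-1}c_1^{-\beta}s^{-\gamma}+Z(s)s^{-\gamma}$ as in Proposition~\ref{prop:fluct}. Since $\alpha\gamma>1$, $\int_1^\infty s^{-\gamma}\di B_\alpha(s)$ converges almost surely, by the same Kallenberg/Rosinski--Woyczynski criterion as in Lemma~\ref{coro:scale}. For the $Z$ part I localise with $|Z|\le\epsilon$ and reuse the estimates of Proposition~\ref{prop:fluct}: the $L^2$ bound for the small jumps and the $L^1$ bound for the large jumps (or the direct bound when $\alpha<1$). Now the integrals $\int_\theta^\infty s^{-\alpha\gamma}\di s$ are finite, so I get a Cauchy criterion and almost sure convergence. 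Altogether $g(X(t))-t\to L$ almost surely, with $L$ finite.

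\emph{Step 2: transfer to $X$.} Let $h=g^{-1}$ on $[g(x_0),\infty)$. Then $X(\lambda)=h(\lambda+L+o(1))$ almost surely. By the mean value theorem, $h'=f\circ h$, and $h(\lambda)\sim c_1\lambda^{1/(1-\beta)}$, we obtain
\[
X(\lambda)-c_1\lambda^{\frac1{1-\beta}}=d_\lambda+a c_1^{\beta}\lambda^{\gamma}\bigl(L+o(1)\bigr),\qquad d_\lambda\coloneqq h(\lambda)-c_1\lambda^{\frac1{1-\beta}}\ \text{deterministic}.
\]
Dividing by $\lambda^{1/\alpha}$ gives $Z_\lambda=d_\lambda\lambda^{-1/\alpha}+k_\lambda(L+o(1))$ with $k_\lambda=ac_1^\beta\lambda^{\gamma-1/\alpha}\to\infty$.

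Now suppose $Z_\lambda$ converged in distribution. Then $(Z_\lambda)$ would be tight, so $(Z_\lambda/k_\lambda)=(d_\lambda\lambda^{-1/\alpha}/k_\lambda+L+o(1))$ would converge in probability-shift sense to a constant. Concretely: if $L$ is not almost surely constant, pick $\ell_1<\ell_2$ with $\mathbb{P}(L<\ell_1)>0$ and $\mathbb{P}(L>\ell_2)>0$. Then for every $\lambda$ at least one of the events $\{|Z_\lambda|>k_\lambda(\ell_2-\ell_1)/3\}$ has probability bounded below, which contradicts tightness. This is elementary once non-degeneracy of $L$ is known.

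\emph{Step 3 (main obstacle): $L$ is not almost surely constant.} First I would use the Markov property at time $1$: $L=\Phi(X(1),\tilde B)$, where $\tilde B=B_\alpha(1+\cdot)-B_\alpha(1)$ is independent of $\mathcal F_1$ and $\Phi(x,\cdot)$ is the limit of $g(X^x(t))-t-1$ for the solution started at $x$. Pathwise comparison in dimension one (same noise, $f$ locally Lipschitz) gives $X^x(t)\le X^y(t)$ for $x\le y$, so $\Phi(\cdot,\tilde B)$ is nondecreasing. To get strict variation, I would linearise: $\partial_x X^x(t)=\exp\int_0^t f'(X^x(s))\di s$. Using $(\ln f)'=O(1/x)$ and $g'=1/f$,
\[
\partial_x\, g(X^x(t))=\frac{1}{f(x)}\exp\int_0^t\bigl(f'(X^x)-\ldots\bigr)\,\di s,
\]
which after the jump corrections should stay bounded away from $0$. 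Hence $\Phi(x,\tilde B)-\Phi(y,\tilde B)\ge c_\omega(y-x)$ on a set of positive probability.

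Since $X(1)$ is non-degenerate (because $B_\alpha$ has full support and $X(1)$ is independent of $\tilde B$), conditioning on $\tilde B$ then shows that $L$ is not almost surely constant. If the derivative bound proves delicate, my fallback is a more direct route: exploit a single large jump of $B_\alpha$ in $[0,1]$, independent of everything else, which shifts $X(1)$, and hence $L$, by a non-trivial amount.
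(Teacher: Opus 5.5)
Steps~1--2 are a valid alternative reduction: an a.s. limit $L$ of $g(X(t))-t$, then non-tightness of the normalised variable if $L$ is non-degenerate. But Step~3 carries the whole content of the proposition in your approach, and it is not proved.

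The displayed formula for $\partial_x g(X^x(t))$ has an ellipsis exactly where the work is. The correct identity is $\partial_x g(X^x(t))=\exp\bigl(\int_0^t f'(X^x(s))\,\di s-\ln f(X^x(t))\bigr)$ (for $X^x(t)\ge x_0$). So you must show that $\ln f(X^x(t))-\int_0^t f'(X^x(s))\,\di s$ stays bounded above as $t\to\infty$. By Itô's formula for $\ln f(X^x)$, this is, up to an a.s. finite term, $\int_0^t(\ln f)'(X^x(s-))\,\di B_\alpha(s)$ plus a sum of second-order jump terms. The natural way to control those jump terms needs $\ln f\in\mathcal C^2$ with a bound on $(\ln f)''$, which is not among the hypotheses ($f\in\mathcal C^1$, $(\ln f)'=O(1/x)$). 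So ``should stay bounded away from $0$'' is an unproved claim. Your fallback (a large jump in $[0,1]$ shifting $X(1)$) relies on the same strict monotonicity of $\Phi(\cdot,\tilde B)$, so it adds nothing.

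Two minor points. The inequality for $\Phi$ has the wrong sign. In Step~1, the condition $\frac2\alpha-\frac2{1-\beta}<-1$ is equivalent to $\beta>\frac{2-\alpha}{2+\alpha}$; this follows from $\beta>\frac1{1+\alpha}$, not from $\beta>1-\alpha$.

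The missing idea is a time shift, which is what the paper uses and which makes strict monotonicity unnecessary. The paper couples a copy $(\tilde B_\alpha,\tilde X)$ of $(B_\alpha,X)$ so that, on an event $E$ with $\mathbb P(E)>0$, $\tilde X(2)=X(1)$ and the noise increments coincide afterwards. Hence $\tilde X(t+1)=X(t)$ for $t\ge1$ on $E$. In your language, $\tilde L=L-1$ on $E$ while $\tilde L\overset{(d)}{=}L$, so $L$ cannot be a.s. constant.

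Equivalently, suppose $L\equiv\ell$. The Markov property at times $1$ and $2$ then forces $\lim_t(g(X^x(t))-t)=\ell+1$ for a.e. $x$ under the law of $X(1)$, and $=\ell+2$ for a.e. $x$ under the law of $X(2)$. This is impossible unless these two laws are mutually singular. With your pathwise monotonicity in $x$, it even suffices that their supports share an open interval.

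The paper does not pass through $L$ at all; it applies the coupling directly to $W(\lambda)$. Tightness of $W(\lambda)$ and $\tilde W(\lambda+1)$ clashes with the deterministic gap $c_1\bigl((\lambda+1)^{\frac1{1-\beta}}-\lambda^{\frac1{1-\beta}}\bigr)\lambda^{-1/\alpha}\asymp\lambda^{\gamma-1/\alpha}\to\infty$ on $E$. So the paper needs neither your Step~1 nor the regularity assumptions of Theorem~\ref{th:main1}. Its one tacit input is that the laws of $X(1)$ and $X(2)$ are not mutually singular.

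Once Step~3 is repaired this way, your route gives more than the paper's. It yields an a.s. limit for $g(X(t))-t$, and it identifies the true scale $\lambda^{\gamma}\gg\lambda^{1/\alpha}$ of the random part of $X(\lambda)-c_1\lambda^{1/(1-\beta)}$.
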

\begin{proof}


Let $(\tilde{B}_\alpha,\tilde{X})$ be distributed as a solution $(B_\alpha,X)$ of SDE~\eqref{eq:sde}, 
and coupled to it in such a way that, in an event $E$ of probability $p>0$, $\tilde{X}(2)=X(1)$, and such that for all $t\geq 1$ the increment $\tilde{B}_\alpha(t+1)-\tilde{B}_\alpha(2)$ is taken to be equal to $B_\alpha(t)-B_\alpha(1)$. Thus, on the event $E$, for all $t\geq 1$, $\tilde{X}(t+1)=X(t)$. 

We proceed by contradiction and assume $W(\lambda)\coloneqq \frac{X(\lambda )-c_1 \lambda^\frac{1}{1-\beta}}{\lambda^{1/\alpha}}$ converges in distribution. Since $\tilde{X}$ is distributed as $X$, it then also holds that $\tilde{W}(\lambda)\coloneqq  \frac{\tilde{X}(\lambda )-c_1 \lambda^\frac{1}{1-\beta}}{\lambda^{1/\alpha}}$ converges in distribution. In particular, for all $\epsilon>0$, $\lambda^{-\epsilon} (W(\lambda)-\tilde{W}(\lambda+1))$ converges toward $0$ in probability as $\lambda\to \infty$. 
Let $\epsilon\in(0, \frac{1}{1-\beta}-\frac{1}{\alpha}-1)$, which exists since $\beta>1/(1+\alpha)$. 
In the event $E$, 
\begin{align*} 
\lambda^{-\epsilon} (W(\lambda)-\tilde{W}(\lambda+1))
= 
\lambda^{-\epsilon} 
\frac{-c_1 \lambda^\frac{1}{1-\beta}+ c_1 ( \lambda+1 )^\frac{1}{1-\beta}   }{\lambda^{1/\alpha}}
&=c_1 \lambda^{\frac{1}{1-\beta}-\frac{1}{\alpha}-\epsilon  } (   (1+\lambda^{-1})^\frac{1}{1-\beta}-1)\\
& \sim \frac{c_1}{1-\beta} 
\lambda^{\frac{1}{1-\beta}-\frac{1}{\alpha}-1-\epsilon  }\underset{\lambda\to \infty}\longrightarrow +\infty, 
\end{align*}
which contradicts the fact the same quantity converges toward $0$ and therefore concludes the proof. 
\end{proof}

	\section*{Acknowledgements}
\phantom{ }
\vspace{-1em}

AM was supported in part by EPSRC grants EP/V009478/1 and EP/W006227/1. AP thanks  the Swiss National Science
Foundation for partial support  of the paper (grants No. IZRIZ0\_226875, No. 200020\-\_200400, No. 200020\_192129). IS was supported by the EPSRC grant EP/W006227/1.

The authors thank the Isaac Newton Institute for Mathematical Sciences at the University of Cambridge for support and hospitality during the programme \emph{Stochastic systems for anomalous diffusion}. This work was supported by EPSRC grant EP/Z000580/1.

	\bibliographystyle{plain}
	\bibliography{pilipenko_AMS_2024}

\end{document}